\documentclass[12pt]{amsart}

\usepackage[utf8]{inputenc}
\usepackage{microtype}
\usepackage{amsfonts}
\usepackage{amsmath}
\usepackage{graphicx}
\usepackage{float}
\usepackage[dvipsnames]{xcolor}
\usepackage{hyperref}
\usepackage{tikz-cd}


\usepackage{amssymb}
\usepackage{enumitem}
\usepackage{mathrsfs}

\usepackage{tikz}
\usetikzlibrary{topaths}
\usetikzlibrary{calc}

\usepackage{a4wide}

\usepackage{empheq}


\newtheorem{theorem}{Theorem}[section]
\newtheorem*{theorem*}{Theorem}
\newtheorem{lemma}[theorem]{Lemma}
\newtheorem{proposition}[theorem]{Proposition}
\newtheorem*{proposition*}{Proposition}
\newtheorem{corollary}[theorem]{Corollary}
\newtheorem*{corollary*}{Corollary}

\newtheorem*{conjecture*}{Conjecture}
\newtheorem{question}[theorem]{Question}
\newtheorem*{question*}{Question}

\newtheorem*{bhc}{Boone--Higman conjecture}
\newtheorem*{pbhc}{Permutational Boone--Higman conjecture}

\newtheorem*{thrmAprecise}{Theorem~A (precise version)}

\theoremstyle{definition}

\newtheorem*{definition*}{Definition}
\newtheorem{remark}[theorem]{Remark}

\newtheorem{observation}[theorem]{Observation}

\newcommand{\N}{\mathbb{N}}
\newcommand{\Z}{\mathbb{Z}}

\newcommand{\newword}[1]{\textbf{#1}}

\DeclareMathOperator{\Hom}{Hom}
\DeclareMathOperator{\Aut}{Aut}
\DeclareMathOperator{\MCG}{MCG}
\DeclareMathOperator{\Out}{Out}
\DeclareMathOperator{\Inn}{Inn}

\DeclareMathOperator{\GL}{GL}

\DeclareMathOperator{\F}{\mathcal{F}}

\DeclareMathOperator{\Stab}{Stab}

\newcommand{\Burger}{\Lambda}

\renewcommand{\ll }{\langle\hspace{-.7mm}\langle }
\newcommand{\rr }{\rangle\hspace{-.7mm}\rangle }

\numberwithin{equation}{section}

%
%

\begin{document}

\title[Boone--Higman for Aut$(F_n)$ and MCGs of punctured surfaces]{Boone--Higman embeddings of Aut$\boldsymbol{(F_n)}$ and mapping class groups of punctured surfaces}
\date{\today}
\subjclass[2020]{Primary 20F65;   
                 Secondary 20E32} 

\keywords{Thompson group, simple group, finitely presented, word problem, Boone--Higman conjecture, automorphism group, braid group, mapping class group}

\author[J.~Belk]{James Belk}
\address{School of Mathematics \& Statistics, University of Glasgow, Glasgow, Scotland}
\email{jim.belk@glasgow.ac.uk}

\author[F.~Fournier-Facio]{Francesco Fournier-Facio}
\address{Department of Pure Mathematics and Mathematical Statistics, University of Cambridge, UK}
\email{ff373@cam.ac.uk}

\author[J.~Hyde]{James Hyde}
\address{Department of Mathematics and Statistics, Binghamton University, Binghamton, NY}
\email{jhyde1@math.binghamton.edu}

\author[M.~C.~B.~Zaremsky]{Matthew C.~B.~Zaremsky}
\address{Department of Mathematics and Statistics, University at Albany (SUNY), Albany, NY}
\email{mzaremsky@albany.edu}

\begin{abstract}
We prove that the groups $\Aut(F_n)$ satisfy the Boone--Higman conjecture for all $n$, meaning each $\Aut(F_n)$ embeds in a finitely presented simple group. In fact, we prove that each $\Aut(F_n)$ satisfies the ``permutational'' Boone--Higman conjecture, which means the simple group in question can be taken to be a twisted Brin--Thompson group. A far-reaching consequence of our approach is that finitely presented twisted Brin--Thompson groups are universal among finitely presented simple groups that are highly transitive. This is evidence toward the Boone--Higman conjecture being equivalent to its permutational version. Proving the conjecture for $\Aut(F_n)$ also confirms the conjecture for all groups (virtually) embedding into some $\Aut(F_n)$, such as mapping class groups of non-closed surfaces, braid groups, loop braid groups, ribbon braid groups and certain Artin groups. This answers several questions of the first and fourth authors with Bleak and Matucci. Yet another consequence of our approach is that satisfying the permutational Boone--Higman conjecture is closed under free products.
\end{abstract}

\maketitle
\thispagestyle{empty}

\section*{Introduction}

The \newword{Boone--Higman conjecture}, posed by Boone and Higman over 50 years ago \cite{boone74}, predicts the following:

\begin{bhc}
Every finitely generated group with solvable word problem embeds in a finitely presented simple group.
\end{bhc}

Here a group has \newword{solvable word problem} if there is an algorithm to tell whether or not a given word in the generators represents the identity. The converse of the conjecture is easily seen to be true \cite{kuznetsov58}. The Boone--Higman--Thompson theorem \cite{thompson80} states that every finitely generated group with solvable word problem embeds in a finitely generated simple subgroup of a finitely presented group, so the conjecture is that this two-step embedding process can be reduced to an embedding in a single group. Some examples of groups for which the conjecture is known to hold include $\GL_n(\Z)$ \cite{scott84}, hyperbolic groups \cite{bbmz_hyp}, Baumslag--Solitar groups and (finitely generated free)-by-cyclic groups \cite{bux_boonehigman}, contracting or finitely presented self-similar groups \cite{bbmz_hyp,zaremsky_fpss}, and all subgroups thereof. See \cite{bbmz_survey} for more on the history and progress around this conjecture.

Our first main result adds to this list a very prominent and important family of groups in geometric group theory, namely the group $\Aut(F_n)$ of automorphisms of the free group $F_n$, for each $n$.

{\renewcommand*{\thetheorem}{\Alph{theorem}}
\begin{theorem}\label{thrm:autfn}
For each $n$, the group $\Aut(F_n)$ embeds in a finitely presented simple group, and hence satisfies the Boone--Higman conjecture.
\end{theorem}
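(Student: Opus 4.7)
The plan is to prove the stronger permutational Boone--Higman conjecture for $\Aut(F_n)$, by embedding it into a finitely presented twisted Brin--Thompson group $SV_G$. Such groups are simple under mild hypotheses on the action, so the theorem reduces to exhibiting a group $G \geq \Aut(F_n)$ acting on a Cantor set $\Cantor$ in such a way that $SV_G$ is finitely presented.

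The first step is to identify a workable action. A natural candidate for $\Cantor$ is the Gromov boundary $\partial F_n$, on which $\Aut(F_n)$ acts by homeomorphisms via its induced action on infinite reduced words. This Cantor set carries an intrinsic tree structure compatible with the prefix-replacement rules that underpin Brin--Thompson groups. I would realise $\Cantor$ as the boundary of a $(2n-1)$-regular rooted tree, combine the $\Aut(F_n)$-action with the natural rearrangement action of a Higman--Thompson-type group, and let $G$ be a group containing both. This automatically yields an embedding $\Aut(F_n) \hookrightarrow SV_G$. In the event that the raw boundary action is not well-behaved enough for the twisted Brin--Thompson machinery (for instance, if some Nielsen automorphisms fail to be piecewise self-similar on $\partial F_n$), I would replace $\partial F_n$ by an enlarged Cantor set built from a richer combinatorial object, such as the set of maximal cyclic subgroups or a space of reduced splittings, on which the Nielsen generators do act in a piecewise self-similar fashion.

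The main obstacle I expect is verifying that $SV_G$ is finitely presented. General finite presentability criteria for twisted Brin--Thompson groups require $G$ to be finitely presented and its action on $\Cantor$ to satisfy strong cocompactness and stabilizer-finiteness conditions on some natural system of bricks. That $\Aut(F_n)$ itself is finitely presented is classical (McCool), so the technical heart of the argument is to arrange the combinatorics of the action --- exploiting the self-similar structure of $\partial F_n$ and the generation of $\Aut(F_n)$ by Nielsen transformations --- so that finitely many orbits of bricks suffice and the relevant stabilizers inherit good finiteness properties. Once this is done, simplicity of $SV_G$ is automatic and Theorem~\ref{thrm:autfn} follows. The same framework should then extend with minor modifications to subgroups of $\Aut(F_n)$ and related groups, which explains the universality, free-product, and mapping-class-group consequences advertised in the abstract and provides a useful sanity check on the approach.
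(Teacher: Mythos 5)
Your high-level target is the right one --- embed $\Aut(F_n)$ into a group $\Gamma$ for which the twisted Brin--Thompson group $SV_\Gamma$ is finitely presented --- but the action you propose is wrong in kind, and the concrete candidate fails at the first step. A twisted Brin--Thompson group $SV_\Gamma$ is built from a faithful action of $\Gamma$ on a \emph{set} $S$ (the group then acts on the Cantor cube $(\{0,1\}^\N)^S$), and finite presentability of $SV_\Gamma$ is equivalent to that action being of type~(A): $\Gamma$ finitely presented, point stabilizers finitely generated, finitely many orbits of pairs. You instead ask for a group acting on a Cantor set by ``piecewise self-similar'' prefix replacements, which is the setup for R\"over--Nekrashevych-type or rearrangement-group embeddings, not the twisted Brin--Thompson machinery. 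Moreover, the specific candidate $\partial F_n$ does not work even for that alternative setup: while $\Aut(F_n)$ does act on $\partial F_n$ by homeomorphisms, a Nielsen transformation such as $x_1\mapsto x_1x_2$ does not act by local prefix substitutions on infinite reduced words (cancellation propagates unboundedly), so the action is not compatible with the tree/table structure a Thompson-type overgroup would need. Your fallback (``maximal cyclic subgroups or a space of reduced splittings'') is not a construction, and the ``main obstacle'' you flag --- finite presentability --- is left entirely to hope.

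The paper's route is quite different and is worth internalizing. One enlarges $\Aut(F_n)$ to $\Aut_G(G*F_n)$, the group of automorphisms of the free product $G*F_n$ restricting to the identity on $G$, where $G$ is a finitely presented simple MIF group such as Thompson's group $V$ (the copy of $G$ enters via the translations $x_1\mapsto x_1g$). This group acts by precomposition on the \emph{discrete set} $S=\Hom_G(G*F_n,G)\cong G^n$. The MIF hypothesis is exactly what makes this action faithful; simplicity of $G$ yields high transitivity (hence finitely many orbits of pairs); finite presentability of $\Aut_G(G*F_n)$ comes from Carette's work on automorphism groups of free products; and finite generation of point stabilizers is proved via a retraction-like map $T$ from $\Aut_G(G*F_n)$ onto the stabilizer of the trivial homomorphism. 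With type~(A) verified, $SV_\Gamma$ is a finitely presented simple group containing $\Aut(F_n)$. None of these ingredients appears in your proposal, so as it stands the argument has no path to completion.
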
}

This quickly leads to the following corollary, which in particular settles \cite[Problem~5.3(1)]{bbmz_survey}, and partially settles \cite[Problem~5.3(2, 3)]{bbmz_survey}.

{\renewcommand*{\thetheorem}{\Alph{theorem}}
\begin{corollary}\label{cor:autfn_pbhc}
The following groups satisfy the Boone--Higman conjecture:
\begin{itemize}
    \item The (extended) mapping class group of an orientable surface $\Sigma$ of finite type, where $\Sigma$ either has non-empty boundary, at least one puncture, and/or genus at most two.\smallskip
    \item The braid groups $\mathcal{B}_n$ for all $n$.\smallskip
    \item Artin groups of types $B_n=C_n$, $D_n$, $I_2(m)$, and $\widetilde{A}_n$, for all $n$ (braid groups are Artin groups of type $A_n$).\smallskip
    \item Loop braid groups $\mathcal{LB}_n$, extended loop braid groups $\mathcal{LB}_n^{ext}$, ribbon braid groups $\mathcal{RB}_n$.
\end{itemize}
\end{corollary}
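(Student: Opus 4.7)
The plan is to reduce each family in the statement to showing that the group in question (possibly after passing to a finite-index subgroup) embeds into $\Aut(F_n)$ for some $n$, and then to apply Theorem~\ref{thrm:autfn}. This reduction relies on two standard closure properties of the Boone--Higman conjecture. Closure under subgroups is immediate from the definition. Closure under finite extensions is less obvious, but can be obtained by embedding a finite extension of an already-embedded group into a wreath product with a finite symmetric group and then realising that wreath product inside a finitely presented simple group (for example, an appropriate twisted Brin--Thompson group, since the setting of this paper makes such wreath products easy to accommodate).

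For mapping class groups, the essential case is when $\Sigma$ has non-empty boundary or at least one puncture. In that case $\pi_1(\Sigma)$ is a finitely generated free group, and a version of the Dehn--Nielsen--Baer theorem produces an injection of the (extended) mapping class group into $\Aut(\pi_1(\Sigma))=\Aut(F_n)$. The remaining closed cases of genus at most two reduce to classical results: the sphere has trivial mapping class group; the torus has (extended) mapping class group $\GL_2(\Z)$, covered by Scott's theorem; and the closed genus two surface is handled via the Birman--Hilden correspondence, which relates $\MCG(\Sigma_2)$ to the mapping class group of a six-punctured sphere via a central $\Z/2$ kernel and finite cokernel, reducing it to a punctured case after applying the two closure properties.

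The braid and Artin group families follow once the mapping class group case is in hand. The braid group $B_n$ is the mapping class group of the $n$-punctured disk, and its Artin representation is the corresponding embedding into $\Aut(F_n)$. The Artin groups of types $B_n=C_n$, $D_n$, $I_2(m)$, and $\widetilde{A}_n$ all (virtually) embed into mapping class groups of punctured disks, annuli, or small punctured surfaces, via classical results of Crisp--Paris, Perron--Vannier, and others. The loop braid group $LB_n$ admits a well-known faithful action on $F_n$ by automorphisms that permute and conjugate the free basis, giving $LB_n \hookrightarrow \Aut(F_n)$ directly; the extended and ribbon variants embed by analogous constructions.

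The main delicate point is assembling the correct ``virtual embedding'' statement for each family from the literature and then verifying that closure of the Boone--Higman conjecture under finite extensions applies in exactly the form needed, especially for the closed genus two mapping class group and for those Artin groups that only embed into an $\Aut(F_n)$ after a subgroup of finite index is taken. The individual embeddings themselves are all classical; the real work of the corollary is in this bookkeeping, after which the result follows immediately from Theorem~\ref{thrm:autfn}.
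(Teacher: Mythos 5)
Your handling of the braid, Artin, and loop braid families follows the paper's route (embeddings into $\Aut(F_n)$ or into mapping class groups of punctured disks/annuli, plus commensurability invariance of PBH, which the paper cites rather than reproves). The genuine gap is in your treatment of punctured surfaces. For a punctured surface $\Sigma$ with no marked point, the Dehn--Nielsen--Baer theorem gives an injection of $\MCG(\Sigma)$ into $\Out(\pi_1(\Sigma))$, \emph{not} into $\Aut(\pi_1(\Sigma))$: a mapping class only determines an automorphism of $\pi_1$ up to conjugacy, because no basepoint is preserved. Since Boone--Higman for $\Out(F_n)$ is open, your claimed embedding $\MCG(\Sigma)\hookrightarrow\Aut(F_n)$ does not exist as stated. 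The paper circumvents this by adding a marked point $*$: the basepoint-preserving group $\MCG(\Sigma,*)$ genuinely acts on $\pi_1(\Sigma,*)$, embeds into $\Aut(F_n)$ \emph{provided $\Sigma$ is already non-closed}, and has finite index in $\MCG(\Sigma\setminus\{*\})$ --- but this only reaches surfaces with at least two punctures. The once-punctured closed surface is exactly the case this cannot reach: there $\MCG(\Sigma\setminus\{*\})=\MCG(\Sigma,*)$ identifies with $\Aut(\pi_1(\Sigma))$ for $\Sigma$ \emph{closed}, i.e.\ with the automorphism group of a closed surface group, which your route does not place inside any $\Aut(F_n)$. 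The paper needs a separate argument here: pass to the finite-index subgroup of mapping classes that lift to a suitable double cover $\widetilde{\Sigma}$ (which has two punctures) and embed it into $\MCG(\widetilde{\Sigma})$ via a Birman--Hilden-type injectivity result. Your proposal omits this step entirely, and it is the most delicate part of the corollary.

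A smaller gap concerns closed genus two. Birman--Hilden gives a \emph{central extension} $1\to\Z/2\Z\to\MCG(\Sigma)\to\MCG(\Sigma')\to 1$ with $\Sigma'$ a six-punctured sphere; closure under subgroups and under finite-index overgroups does not by itself let you pass from the quotient back up to the extension --- you need the extension to virtually split. The paper obtains this from residual finiteness of $\MCG(\Sigma)$: choosing a finite quotient $Q$ in which the hyperelliptic involution survives, the diagonal map $\MCG(\Sigma)\to\MCG(\Sigma')\times Q$ is injective, so $\MCG(\Sigma)$ is commensurable to $\MCG(\Sigma')$. Your ``apply the two closure properties'' does not supply this. (Surfaces with nonempty boundary are fine in your approach, since a basepoint on the boundary yields an honest map to $\Aut(F_n)$; the paper instead caps boundary components with twice-punctured disks to reduce to the punctured case.)
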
}

For spherical and Euclidean Artin groups, the Boone--Higman conjecture remains open for the exceptional type spherical Artin groups, and the Euclidean Artin groups other than type $\widetilde{A}_n$; in rank $3$ it is also known for type $\widetilde{C}_2$ and $\widetilde{G}_2$ (and $\widetilde{A}_2$, which we recover) \cite{bux_boonehigman}. Note that for general Artin groups it is a famous open problem whether they even have solvable word problem \cite[Problem~10]{charney:problems}, but for all spherical and Euclidean types they do \cite{deligne:Kpi1, mccammond17}. Also note that all Coxeter groups satisfy the Boone--Higman conjecture \cite{belk20}.

For mapping class groups, the conjecture remains open for closed surfaces of genus at least three. Note that the Boone--Higman conjecture is known for many finitely generated linear groups \cite{scott84, zaremsky_fpss}, but it is an open problem whether mapping class groups of closed surfaces of genus at least three are linear. The case of genus two is special, thanks to an exceptional relationship with braid groups, which is behind both the proof of linearity in \cite{bigelow:budney} and our proof of the Boone--Higman conjecture for these groups.

As a further application of Theorem~\ref{thrm:autfn}, we also recover the conjecture for some important classes of groups for which it was already known, such as virtually compact special groups and (finitely generated free)-by-cyclic groups \cite[Problem~5.3(8)]{bbmz_survey}; see Remark~\ref{rem:othergroups}.

\medskip

In fact we prove a more precise version of Theorem~\ref{thrm:autfn} that is stronger in two ways. Before stating the strong version we need some background. First, we work with the much larger group $\Aut_G(G*F_n)$ of $G$-automorphisms of the free product $G*F_n$, for $G$ a group with certain properties. Here a \newword{$\boldsymbol{G}$-automorphism} of $G*F_n$ is one that restricts to the identity on $G$, so clearly $\Aut(F_n)$ embeds into $\Aut_G(G*F_n)$ for any $G$. Second, the finitely presented simple groups we obtain in Theorem~\ref{thrm:autfn} always come from the family of twisted Brin--Thompson groups, thus proving that the groups in question satisfy the \emph{a priori} stronger ``permutational'' form of the Boone--Higman conjecture, which will be explained shortly. Twisted Brin--Thompson groups are defined as follows. Given a group $\Gamma$ acting faithfully on a set $S$, we can form the permutational wreath product $V\wr_S\Gamma$, where $V$ is the classical Thompson group. This admits a natural action on $(\{0,1\}^\N)^S$, and the \newword{twisted Brin--Thompson group} $SV_\Gamma$ is the corresponding topological full group. These groups were introduced by the first and fourth authors in \cite{belk22}, following the ``untwisted'' case due to Brin \cite{brin04}; also see \cite{zar_taste,bbmz_survey} for more details.

\begin{thrmAprecise}
For each $n$, the group $\Aut(F_n)$ (and hence all of its subgroups) embeds in a finitely presented simple group, namely the twisted Brin--Thompson group $SV_\Gamma$ where $\Gamma=\Aut_V(V*F_n)$ and $S=\Hom_V(V*F_n,V)$, and hence satisfies the Boone--Higman conjecture.
\end{thrmAprecise}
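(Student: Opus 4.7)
The plan is to verify three things for $SV_\Gamma$ with $\Gamma=\Aut_V(V*F_n)$ and $S=\Hom_V(V*F_n,V)$: that $\Aut(F_n)$ embeds into $\Gamma$, that $\Gamma$ embeds into $SV_\Gamma$, and that $SV_\Gamma$ is finitely presented and simple. The embedding $\Aut(F_n)\hookrightarrow\Gamma$ is completely transparent, since any $\alpha\in\Aut(F_n)$ extends uniquely to a $V$-automorphism of $V*F_n$ that is the identity on the $V$-factor. The embedding $\Gamma\hookrightarrow SV_\Gamma$ is a standard feature of the twisted Brin--Thompson construction from \cite{belk22}, available as soon as $\Gamma$ acts faithfully on $S$.

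Next I would verify faithfulness of the action of $\Gamma$ on $S$ by precomposition with the inverse. After identifying $S$ with $V^n$ via $\varphi\mapsto(\varphi(x_1),\ldots,\varphi(x_n))$ on a fixed free basis $x_1,\ldots,x_n$ of $F_n$, the claim reduces to showing that $V$-homomorphisms $V*F_n\to V$ separate points of $V*F_n$. This should be straightforward: $V$ is known to contain free subgroups of every rank, so given any two distinct elements of $V*F_n$ written in free-product normal form, sending the generators $x_j$ to a suitable free basis inside $V$ yields a $V$-homomorphism that distinguishes them. Once faithfulness is in hand, simplicity of $SV_\Gamma$ follows from the general simplicity criterion for twisted Brin--Thompson groups from \cite{belk22}.

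The principal obstacle is the finite presentation of $SV_\Gamma$. I expect the paper to establish a general criterion, morally captured by the ``universality'' statement in the abstract, saying that for suitable data of the form $\Gamma=\Aut_V(V*G)$ acting on $S=\Hom_V(V*G,V)$, with $G$ finitely presented, the twisted Brin--Thompson group $SV_\Gamma$ is finitely presented (and highly transitive) whenever $\Gamma$ itself is finitely presented. Verifying this criterion in the case $G=F_n$ then requires two things. First, that $\Gamma=\Aut_V(V*F_n)$ is finitely presented: I would obtain this from the Fouxe-Rabinovitch / McCool structure theory of automorphism groups of free products, using finite presentability of both factors and the fact that the ``identity-on-$V$'' condition cuts the generating and relation sets down in a controllable way. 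Second, that the action of $\Gamma$ on $S\cong V^n$ has sufficient transitivity and finiteness properties; the key leverage here is that $V$ acts highly transitively on Cantor space, which propagates through the free-product structure to give $\Gamma\curvearrowright S$ enough transitivity to make the Stein--Farley machinery used to prove finite presentability of $SV_\Gamma$ go through.
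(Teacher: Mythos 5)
Your overall skeleton matches the paper's: embed $\Aut(F_n)$ into $\Gamma=\Aut_V(V*F_n)$, show the precomposition action of $\Gamma$ on $S=\Hom_V(V*F_n,V)$ is faithful and satisfies the finiteness criterion for $SV_\Gamma$ to be finitely presented, and invoke simplicity of twisted Brin--Thompson groups. However, your faithfulness argument contains a genuine error. What you need is that for every $1\ne w\in V*F_n$ some $V$-homomorphism $V*F_n\to V$ sends $w$ to a non-identity element, i.e.\ that $V$ has no \emph{mixed identities} (laws with constants from $V$). Sending the $x_j$ to a free basis of a free subgroup of $V$ only rules out identities \emph{without} constants: a word in free-product normal form alternates letters from $V$ and from $F_n$, and after substitution the $V$-constants can cancel against the images of the $x_j$, so non-triviality in $V*F_n$ gives no control over the image. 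Concretely, your argument applies verbatim to Thompson's group $T$ (which also contains free subgroups of every rank), yet $T$ is lawless but \emph{not} MIF, so $T$-homomorphisms $T*F_n\to T$ do not separate points. The correct input is that $V$ is MIF, which follows from the fact that every finitely generated, highly transitive simple group is MIF (Hull--Osin), using the highly transitive action of $V$ on an orbit in Cantor space; this is precisely where the paper uses the MIF hypothesis, and it is the only place faithfulness is used.

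The second gap is that your criterion for finite presentability of $SV_\Gamma$ omits a necessary condition and leaves the others unverified. The actual criterion (type~(A)) requires three things: $\Gamma$ finitely presented, \emph{point stabilizers finitely generated}, and finitely many orbits of pairs. You never mention the stabilizer condition, and it is a substantial part of the paper: the stabilizer of the homomorphism $\phi_0$ killing all $x_i$ is the group of $V$-automorphisms preserving $\ll F_n\rr$, and proving it is finitely generated takes a dedicated argument (a retraction-like map $T\colon\Aut_V(V*F_n)\to\Stab(\phi_0)$ that is shown to be close enough to a homomorphism to transport a finite generating set). Likewise, ``high transitivity of $V$ on Cantor space propagates through the free-product structure'' is not an argument for finitely many orbits of pairs; the paper's proof that the action is highly transitive uses simplicity of $V$ to construct words in $V*\langle x\rangle$ separating any finite subset of $V$ from any element outside it, which is where simplicity (not the Cantor-space action) does the work. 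Your route to finite presentability of $\Gamma$ itself via Fouxe-Rabinovitch/McCool-type presentations is in the right spirit (the paper passes to $\Inn(V*F_n)\rtimes\Aut_V(V*F_n)$ and quotes Carette's theorem, using that $V$ has trivial center), but as stated it is a plan rather than a proof.
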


Here $\Hom_G(G*F_n,G)$ denotes the set of $G$-homomorphisms from $G*F_n$ to $G$, that is, homomorphisms restricting to the identity on $G$. The action of $\Aut_G(G*F_n)$ on $\Hom_G(G*F_n,G)$ is by precomposition. Note that elements of $\Hom_G(G*F_n,G)$ are in one-to-one correspondence with ordered $n$-tuples of elements of~$G$, so the set $S$ in Theorem~\ref{thrm:autfn} can alternatively be viewed as~$V^n$, where again $V$ is the classical Thompson group.

As we mentioned, this shows that our groups satisfy not only the Boone--Higman conjecture but even the ``permutational'' Boone--Higman conjecture. Let us recall what this means. Given a group $\Gamma$ acting faithfully on a set $S$, we say that the action is of \newword{type~(A)} if:
\begin{enumerate}
    \item The group $\Gamma$ is finitely presented.\smallskip
    \item The stabilizer $\Stab_\Gamma(s)$ is finitely generated for all $s\in S$.\smallskip
    \item The action has finitely many orbits of pairs, i.e., the diagonal action of $\Gamma$ on $S\times S$ has finitely many orbits.
\end{enumerate}
Twisted Brin--Thompson groups are always simple \cite{belk22}, and it turns out that $SV_\Gamma$ is finitely presented if and only if the action of $\Gamma$ on $S$ is of type~(A) \cite{zaremsky_fp_tbt}; this is analogous to the fact that $V\wr_S \Gamma$ is finitely presented if and only if the action is of type~(A) \cite{cornulier06}. Now the \newword{permutational Boone--Higman conjecture} \cite{zaremsky_fp_tbt} predicts:

\begin{pbhc}
Every finitely generated group with solvable word problem embeds in a group admitting an action of type~(A), and hence in a finitely presented (simple) twisted Brin--Thompson group.
\end{pbhc}

At this point we should clarify some terminology. When we say that a finitely generated group ``satisfies the Boone--Higman conjecture'' we are implicitly saying it has solvable word problem, and explicitly saying it embeds in a finitely presented simple group. Similarly, when we say it ``satisfies the permutational Boone--Higman conjecture'' we are implicitly saying it has solvable word problem, and explicitly saying it embeds in a group admitting an action of type~(A). For brevity, we will often abbreviate ``permutational Boone--Higman'' by ``PBH'', and ``Boone--Higman'' by ``BH'', and write things like, ``satisfies PBH,'' for, ``satisfies the PBH conjecture.'' Writing ``(P)'' in parentheses emphasizes that even satisfying BH is a new result. By a \newword{Boone--Higman embedding}, we mean an embedding of a group as a subgroup of a finitely presented simple group.

\medskip

In addition to proving (P)BH for $\Aut(F_n)$ and all its subgroups, our approach has some interesting consequences concerning the relationship between BH and PBH. A fundamental question in this context is whether BH and PBH are equivalent, which amounts to determining whether every finitely presented simple group satisfies PBH, and our work here is a step in the direction of a positive answer. Namely, we establish that at least this holds provided the finitely presented simple group is highly transitive or, more generally, mixed identity-free. These conditions are defined as follows:
\begin{itemize}
    \item A group $G$ is \newword{highly transitive} if it admits a faithful action on a set that is $k$-transitive for every $k \geq 1$.  For example, Thompson's group $V$ is highly transitive because of its action on the orbit of $\overline{0}$ in $\{0,1\}^\N$.  Similarly, any twisted Brin--Thompson group is highly transitive \cite[Proposition~5.4]{bbmz_hyp}.\smallskip
    \item If $G$ is a group, a non-trivial word $w(x_1,\ldots,x_n)\in G*F_n$ is called a \newword{mixed identity} in~$G$ if $w(g_1,\ldots,g_n)=1$ for all $g_1,\ldots,g_n\in G$.  That is, a mixed identity is similar to a law, except that it can involve constants from~$G$.  A group $G$ is \newword{mixed identity-free (MIF)} if it has no mixed identities.  
    MIF groups are always lawless, but the converse does not hold; for example, Thompson's group $T$ is lawless but not~MIF \cite[Proposition~4.7]{leboudec22}.
\end{itemize}
See \cite{anashin} for background on mixed identities, and \cite[Theorem~5.9]{hull16} or \cite[Proposition~A.1]{leboudec22} for a proof that every finitely generated, highly transitive simple group is~MIF.

Our main result in this context is the following.

{\renewcommand*{\thetheorem}{\Alph{theorem}}
\begin{theorem}\label{thrm:mif_pbhc}
For a finitely generated group $G$ (with solvable word problem) the following are equivalent:
\begin{enumerate}
    \item $G$ satisfies the permutational Boone--Higman conjecture, i.e., it embeds in a group admitting an action of type~(A).\smallskip
    \item $G$ embeds in a finitely presented (simple) twisted Brin--Thompson group.\smallskip
    \item $G$ embeds in a finitely presented simple group that is highly transitive.\smallskip
    \item $G$ embeds in a finitely presented simple group that is MIF.
\end{enumerate}
In particular, every finitely presented simple group that is highly transitive, or more generally MIF, satisfies the permutational Boone--Higman conjecture, as does every subgroup thereof.
\end{theorem}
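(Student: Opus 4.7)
The plan is to establish the cycle (i) $\Leftrightarrow$ (ii) $\Rightarrow$ (iii) $\Rightarrow$ (iv) $\Rightarrow$ (ii). The equivalence (i) $\Leftrightarrow$ (ii) follows from \cite{zaremsky_fp_tbt}: $SV_\Gamma$ is finitely presented exactly when the action of $\Gamma$ on $S$ is of type~(A), combined with the facts that $SV_\Gamma$ is always simple \cite{belk22} and contains $\Gamma$ naturally. The implication (ii) $\Rightarrow$ (iii) is immediate since every twisted Brin--Thompson group is highly transitive \cite[Proposition~5.4]{bbmz_hyp}. For (iii) $\Rightarrow$ (iv), I would invoke the fact that every finitely generated highly transitive simple group is MIF, e.g.\ \cite[Theorem~5.9]{hull16}.

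The novel content is (iv) $\Rightarrow$ (ii). Given $G \hookrightarrow H$ with $H$ a finitely presented simple MIF group, the strategy is to mimic the construction underlying the precise version of Theorem~A, replacing $V$ by~$H$. Concretely, set $\Gamma := \Aut_H(H*F_n)$ and $S := \Hom_H(H*F_n,H) \cong H^n$ for some $n \geq 1$, with $\Gamma$ acting on $S$ by precomposition. Two preliminary observations: first, $G$ embeds into $\Gamma$ along $G \hookrightarrow H \hookrightarrow \Gamma$, where the second arrow is partial conjugation $h \mapsto \phi_h$, with $\phi_h$ fixing $H$ pointwise and sending each free generator $x_i$ to $h x_i h^{-1}$; this is injective because $F_n$ has trivial centralizer in the free product $H * F_n$. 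Second, the action of $\Gamma$ on $S$ is faithful: if $\phi \in \Gamma$ acts trivially, then each word $\phi(x_i) \cdot x_i^{-1} \in H * F_n$ evaluates to the identity on every $n$-tuple in~$H$, hence is a mixed identity, hence is trivial by the MIF hypothesis; so $\phi(x_i) = x_i$ for every $i$ and $\phi$ is the identity.

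The main obstacle is verifying that the action of $\Gamma$ on $S$ is of type~(A): that $\Gamma$ is finitely presented, that point stabilizers are finitely generated, and that there are finitely many orbits on $S \times S$. This is the technical heart of the paper, and I expect that the proofs supplied for Theorem~A (the case $H = V$) to go through essentially unchanged for an arbitrary finitely presented simple MIF group $H$, since at the relevant points the arguments should invoke only finite presentability, simplicity and the MIF property of the coefficient group rather than any feature specific to $V$. Once this is in place, \cite{zaremsky_fp_tbt} guarantees that $SV_\Gamma$ is finitely presented and simple, and the chain $G \hookrightarrow H \hookrightarrow \Gamma \hookrightarrow SV_\Gamma$ delivers (ii), completing the cycle and proving the theorem.
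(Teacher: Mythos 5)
Your proposal is correct and follows essentially the same route as the paper: the easy implications are cited identically, and the substantive step (iv)$\Rightarrow$(ii) is exactly the paper's argument, namely embed the ambient MIF group $H$ into $\Gamma=\Aut_H(H*F_n)$ and invoke the type~(A) property of the action on $\Hom_H(H*F_n,H)$ (the paper's Theorem~\ref{thrm:autg_pbhc}, which is indeed proved for arbitrary finitely presented simple MIF coefficient groups, not just $V$). The only caveat is that you must take $n\ge 2$ rather than ``some $n\ge 1$,'' since for $n=1$ the action fails to have finitely many orbits of pairs; your partial-conjugation embedding of $H$ into $\Gamma$ differs harmlessly from the paper's choice ($x_1\mapsto x_1g$, fixing $x_2$).
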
}

Twisted Brin--Thompson groups are highly transitive and thus MIF, so Theorem~\ref{thrm:mif_pbhc} has the following immediate consequence.

{\renewcommand*{\thetheorem}{\Alph{theorem}}
\begin{corollary}\label{cor:universal}
Finitely presented (simple) twisted Brin--Thompson groups are universal among finitely presented simple highly transitive groups, and more generally among finitely presented simple MIF groups.\qed
\end{corollary}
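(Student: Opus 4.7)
The plan is to deduce this as an immediate formal consequence of Theorem~\ref{thrm:mif_pbhc}. The key preliminary observation is that Theorem~\ref{thrm:mif_pbhc} applies to any finitely presented simple group: such a group is by definition finitely generated, and it has solvable word problem by the classical theorem of Kuznetsov \cite{kuznetsov58} (which, as noted in the introduction, furnishes the easy converse of the Boone--Higman conjecture). So the hypotheses of Theorem~\ref{thrm:mif_pbhc} are automatically satisfied for any $G$ falling under the hypothesis of Corollary~\ref{cor:universal}.

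I would then handle the two clauses in turn. For the MIF version, let $G$ be a finitely presented simple MIF group. Then $G$ trivially embeds into itself as a finitely presented simple MIF group, so condition~(iv) of Theorem~\ref{thrm:mif_pbhc} is satisfied by $G$. The implication (iv)$\Rightarrow$(ii) of that theorem then produces an embedding of $G$ into a finitely presented (simple) twisted Brin--Thompson group, proving universality in the MIF case. For the highly transitive version, I would appeal to the cited fact that every finitely generated, highly transitive simple group is MIF \cite[Theorem~5.9]{hull16} (see also \cite[Proposition~A.1]{leboudec22}). Consequently, any finitely presented simple highly transitive group is in particular a finitely presented simple MIF group, and the previous paragraph applies.

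There is essentially no genuine obstacle in this argument: all of the substance is packaged inside Theorem~\ref{thrm:mif_pbhc}, and Corollary~\ref{cor:universal} is the direct rephrasing of the implication (iv)$\Rightarrow$(ii) in the special case where the ambient group in condition~(iv) is $G$ itself. The only thing one must take care to invoke explicitly is Kuznetsov's theorem, which ensures that finitely presented simple groups automatically have solvable word problem and hence fit into the framework of Theorem~\ref{thrm:mif_pbhc}.
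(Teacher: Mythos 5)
Your argument is correct and is exactly the paper's intended reading: the corollary is stated with a \qed precisely because it is the special case of Theorem~\ref{thrm:mif_pbhc} in which the ambient group of condition (iii) or (iv) is taken to be $G$ itself, and your invocation of Kuznetsov's theorem to supply solvability of the word problem is the right (if routine) bookkeeping. The only cosmetic simplification available is that for the highly transitive clause you could apply condition (iii) directly rather than first passing to MIF, but this changes nothing of substance.
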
}

What we mean by ``universal'' is that within the class of finitely presented simple highly transitive groups, every member of this class embeds in a member of the subclass of finitely presented (simple) twisted Brin--Thompson groups, and similarly for MIF. This emphasizes the outsized importance of twisted Brin--Thompson groups in the study of finitely presented simple groups. See Remark~\ref{rmk:mif} for a discussion of which known finitely presented simple groups are highly transitive and MIF, including the interesting case of Burger--Mozes groups.

\medskip

The bulk of the work in proving Theorems~\ref{thrm:autfn} and~\ref{thrm:mif_pbhc} amounts to proving the (P)BH conjecture for certain groups of the form $\Aut_G(G*F_n)$, detailed in the following:

{\renewcommand*{\thetheorem}{\Alph{theorem}}
\begin{theorem}\label{thrm:autg_pbhc}
For any finitely presented simple MIF group $G$, the group $\Aut_G(G*F_n)$ ($n\ge 2$) admits an action of type~(A), and hence satisfies the (permutational) Boone--Higman conjecture.
\end{theorem}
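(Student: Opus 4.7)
The plan is to verify directly the three conditions of type~(A), along with faithfulness (required so that $SV_\Gamma$ is defined), for the natural action of $\Gamma := \Aut_G(G*F_n)$ on $S := \Hom_G(G*F_n, G)$ by precomposition. Identifying $S$ with $G^n$ via evaluation on the free generators, the action becomes $\phi \cdot (g_1, \ldots, g_n) = (\rho_g(\phi^{-1}(x_1)), \ldots, \rho_g(\phi^{-1}(x_n)))$, where $\rho_g \colon G*F_n \to G$ is the $G$-homomorphism with $\rho_g(x_i) = g_i$. The MIF hypothesis on $G$ is used exactly for faithfulness: if $\phi$ acts trivially on $S$, then $\phi^{-1}(x_i) \cdot x_i^{-1}$ is a mixed identity in $G$ and hence trivial, so $\phi = \id$. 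A useful preliminary observation is that the action is transitive on $S$, since the $G$-automorphism $x_i \mapsto x_i g_i^{-1}$ sends the trivial tuple to $(g_1, \ldots, g_n)$; this lets me reduce all stabilizer and orbit questions to the basepoint $(1, \ldots, 1)$.

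For the finite-orbits-on-pairs condition~(iii), transitivity reduces the problem to showing the basepoint stabilizer $\Gamma_0 := \Stab_\Gamma((1, \ldots, 1))$ has finitely many orbits on $G^n$. I would identify $\Gamma_0$ with the group $\Aut^G(K)$ of $G$-equivariant automorphisms of $K := \ker(G*F_n \to G,\ x_i \mapsto 1)$, using that $\phi \in \Gamma_0$ if and only if $\phi(K) = K$ and $\phi|_K$ is $G$-equivariant, and that any such restriction extends uniquely to a $G$-automorphism of $G*F_n = K \rtimes G$. I expect exactly two $\Gamma_0$-orbits on $G^n$: the fixed point $\{(1, \ldots, 1)\}$ and its complement. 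For the complement, given any non-trivial $h, h' \in G^n$, I would use $G$-equivariant moves in $\Aut^G(K)$ whose effect on tuples is $h_i \mapsto h_i (g h_j g^{-1})^{\pm 1}$ (Nielsen, $i \neq j$, $g \in G$), $h_i \mapsto g h_i g^{-1}$ (conjugation), $h_i \mapsto h_i^{-1}$ (inversion), and coordinate permutations. Simplicity of $G$ ensures that the normal closure of any $h_i \neq 1$ is $G$, which together with the $n \ge 2$ hypothesis (providing a second coordinate for Nielsen moves) allows me to transform $h$ into $h'$ in finitely many such steps.

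For conditions (i) and~(ii), I would take the moves above, augmented by one-sided multiplications $x_i \mapsto g x_i$ (with $g$ in a finite generating set of $G$), as candidate finite generating sets for $\Gamma$ and $\Gamma_0$ respectively. Finite generation would follow from a Nielsen-style argument showing every $G$-equivariant automorphism of $K$ is a product of such moves---an analogue of the classical generation of $\Aut(F_n)$, enhanced by $G$-twisted versions of each standard Nielsen transformation. Finite presentability of $\Gamma$ would then require a finite list of relations, ideally obtained via a cocompact action on a contractible complex in the tradition of the McCullough--Miller complex for automorphism groups of free products, using finite presentability of $G$ to handle the cell stabilizers.

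The step I expect to be the main obstacle is condition~(i): while the generators are standard, finite presentation in this setting typically requires a carefully constructed equivariant complex of bounded dimension, and adapting a McCullough--Miller-type argument to the subgroup fixing $G$ pointwise (rather than $\Aut$ or $\Out$ of the free product) is where the bulk of the technical work will live. By contrast, the orbit count in~(iii) reduces to a concrete Nielsen-style manipulation, and the role of the MIF hypothesis is remarkably focused---it is used only to make the action faithful, not to constrain the orbit structure.
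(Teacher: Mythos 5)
Your overall architecture matches the paper's: verify faithfulness (via MIF), transitivity, finite generation of a point stabilizer, finitely many orbits of pairs, and finite presentability of $\Aut_G(G*F_n)$. Your treatments of faithfulness and transitivity are exactly the paper's Lemmas~\ref{lem:faithful} and~\ref{lem:trans}, and your Nielsen-move argument for two orbits of the basepoint stabilizer on $G^n$ is sound and is essentially the $k=1$ case of the paper's Proposition~\ref{prop:2trans} (which proves the stronger statement of high transitivity by the same moves): make some coordinate nontrivial, use simplicity to write any target as a product of conjugates of it, and bootstrap through a second coordinate. The correct isolation of the MIF hypothesis to faithfulness alone also matches the paper.

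The genuine gaps are conditions (i) and (ii), which are precisely where your sketches diverge from the paper and remain unverified. For finite presentability, you propose constructing a McCullough--Miller-type complex for the subgroup fixing $G$ pointwise; the paper avoids building anything: since $G$ has trivial center, $\Inn(G*F_n)\cdot\Aut_G(G*F_n)$ splits as $\Inn(G*F_n)\rtimes\Aut_G(G*F_n)$, this semidirect product is exactly Carette's group $\Aut_i(G*F_n)$ of automorphisms agreeing with an inner automorphism on $G$, which is finitely presented by \cite[Theorem~4.1]{carette11}, and finite presentability of the complement follows since $\Inn(G*F_n)$ is finitely generated. For finite generation of $\Stab(\phi_0)$, your plan to run Nielsen reduction on $\Aut^G(K)$ is the step most likely to fail as stated: $K=\ll F_n\rr$ is free of \emph{infinite} rank (with basis $\{gx_ig^{-1}\}$), so the classical Nielsen method for $\Aut(F_n)$ does not transfer, and proving that your list of moves generates all $G$-equivariant automorphisms of $K$ is essentially the whole difficulty. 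The paper sidesteps this with a retraction trick (Proposition~\ref{prop:fg_stab}): using the splitting $G*F_n=\ll F_n\rr\rtimes G$, one defines a map $T\colon\Aut_G(G*F_n)\to\Stab(\phi_0)$ that restricts to the identity on $\Stab(\phi_0)$ and satisfies $T(\alpha\xi)=T(\alpha)\psi$ for each generator $\xi$ and some $\psi$ in the candidate generating set; this immediately forces $\Stab(\phi_0)$ to be generated by that set (and in fact shows it is a quasi-retract, hence finitely presented). If you want to complete your proposal, I would recommend importing these two devices rather than attempting the complex construction and the infinite-rank Nielsen argument directly.
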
}

Theorem~\ref{thrm:autg_pbhc} is the key to proving that (iv) implies (i) in Theorem~\ref{thrm:mif_pbhc}, and also to proving Theorem~\ref{thrm:autfn}. It also has another important consequence, relating to permanence properties of the PBH conjecture. Indeed, the main advantage of knowing PBH for a group, instead of just BH, is that PBH is stable under several group-theoretic constructions. For instance, PBH is stable under commensurability and direct products \cite{zaremsky_fp_tbt}. Now Theorem~\ref{thrm:autg_pbhc} adds another such construction to the list:

{\renewcommand*{\thetheorem}{\Alph{theorem}}
\begin{corollary}\label{cor:freeprod_pbhc}
If $A$ and $B$ satisfy the permutational Boone--Higman conjecture, then the free product $A*B$ also satisfies the (permutational) Boone--Higman conjecture.
\end{corollary}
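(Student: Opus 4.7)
The plan is to reduce to Theorem~\ref{thrm:autg_pbhc} by combining it with Theorem~\ref{thrm:mif_pbhc} and the stability of PBH under direct products, both recalled earlier. Since $A$ and $B$ each satisfy PBH, so does $A \times B$. By the equivalence (i)$\Leftrightarrow$(iv) of Theorem~\ref{thrm:mif_pbhc}, there is a finitely presented simple MIF group $G$ (for concreteness, a finitely presented twisted Brin--Thompson group) containing $A \times B$, and hence containing both $A$ and $B$. The universal property of free products then yields an embedding $A * B \hookrightarrow G * G$, so it suffices to embed $G * G$ into a group satisfying PBH.

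Next, I would embed $G * G \hookrightarrow \Aut_G(G * F_2)$ in two short steps. Inside $G * F_1 = G * \langle x \rangle$, the subgroups $G$ and $x G x^{-1}$ intersect trivially and, by the Kurosh subgroup theorem, generate their free product, giving $G * G \hookrightarrow G * F_1$. Then I would embed $G * F_1 \hookrightarrow \Aut_G(G * F_2)$ via the partial-conjugation map $w \mapsto \phi_{w^{-1}}$, where for $w \in G * F_1$ the map $\phi_w$ is the $G$-automorphism of $G * F_2 = G * \langle x, y \rangle$ that fixes $G$ and $x$ pointwise and sends $y \mapsto w y w^{-1}$. A direct check gives $\phi_{w_1}\phi_{w_2} = \phi_{w_2 w_1}$, so $w \mapsto \phi_{w^{-1}}$ is a homomorphism; injectivity follows because $\phi_w = \phi_{w'}$ forces $w^{-1} w'$ into $C_{G*F_2}(y) = \langle y \rangle$, which meets $G * F_1$ trivially.

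Chaining the embeddings yields $A * B \hookrightarrow G * G \hookrightarrow G * F_1 \hookrightarrow \Aut_G(G * F_2)$. Since $G$ is finitely presented simple MIF, Theorem~\ref{thrm:autg_pbhc} guarantees that $\Aut_G(G * F_2)$ admits an action of type~(A) and hence satisfies PBH; therefore so does $A * B$. I do not foresee any genuine obstacle: the hard work is encapsulated in Theorem~\ref{thrm:autg_pbhc}, and the only careful point is arranging a single MIF group $G$ containing both $A$ and $B$, which is precisely what direct-product closure together with Theorem~\ref{thrm:mif_pbhc} provides.
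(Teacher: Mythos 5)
Your proposal is correct and follows essentially the same route as the paper: use closure of PBH under direct products to find a single finitely presented simple MIF (twisted Brin--Thompson) group $G$ containing $A\times B$, embed $A*B$ into $G*G$ and then into $\Aut_G(G*F_2)$, and invoke Theorem~\ref{thrm:autg_pbhc}. The only (immaterial) difference is the explicit copy of $G*G$ inside $\Aut_G(G*F_2)$: you realize it via partial conjugations of $y$ by elements of $G*\langle x\rangle$, whereas the paper uses right translations of $x_1$ by elements of $G$ and of $x_2^{-1}Gx_2$; both verifications are routine and correct.
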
}

For example the free product $V*V$ of Thompson's group $V$ with itself satisfies BH, which was not known before. Much more generally, any free product of twisted Brin--Thompson groups satisfies the conjecture.

\medskip

The key to all of this is Theorem~\ref{thrm:autg_pbhc}. To prove this, we consider the action of $\Aut_G(G*F_n)$ on $\Hom_G(G*F_n,G)$, for $G$ a finitely presented simple MIF group, and prove it is of type~(A). We should mention that in the course of the proof that the action is of type~(A), the only part that actually requires $G$ to be MIF is the proof of faithfulness. If the MIF requirement could be dropped, for example by proving that the quotient of $\Aut_G(G*F_n)$ by the kernel of the action on $\Hom_G(G*F_n,G)$ is finitely presented, or by proving that every finitely presented simple group embeds in one that is MIF, then we would conclude that finitely presented (simple) twisted Brin--Thompson groups are universal among \emph{all} finitely presented simple groups, and that BH is equivalent to PBH. We do not know whether every finitely presented simple group embeds in a finitely presented simple MIF group; there do exist finitely presented simple groups that are not MIF, for example among groups acting on the circle, see \cite[Proposition~4.7]{leboudec22}, such as Thompson's group $T$ and certain variants, including torsion-free examples \cite{hyde23}. Note however that $T$ embeds in Thompson's group $V$, which is MIF, so it is unclear what to expect in general.

\medskip

This paper is organized as follows. In Section~\ref{sec:group} we set up the group $\Aut_G(G*F_n)$ and prove some basic properties of it. In Section~\ref{sec:action} we look at the action of $\Aut_G(G*F_n)$ on $\Hom_G(G*F_n,G)$ and prove that if $G$ is nice then the action is nice. In Section~\ref{sec:main_proof} we prove our main results, and in Section~\ref{sec:AutFn} we prove Corollary~\ref{cor:autfn_pbhc}.

\subsection*{Acknowledgments} We thank Ian Agol, Collin Bleak, Matt Brin, Benjamin Br\"uck, \mbox{Kai-Uwe} Bux, Adrien Le Boudec, Yash Lodha, Robbie Lyman, Dan Margalit, Timoth{\'e}e Marquis, Nicol{\'a}s Matte Bon, Francesco Matucci, Denis Osin, Andy Putman, Ric Wade, Henry Wilton, Becca Winarski, and Xiaolei Wu for helpful conversations and pointers to references. FFF is supported by the Herchel Smith Postdoctoral Fellowship Fund.

\section{Groups of $G$-automorphisms}\label{sec:group}

We introduce some notions from algebraic geometry on groups, as developed in \cite{AGG}; these will be at the heart of our constructions.
Let $G$ be a group. A \newword{$\boldsymbol{G}$-group} is a group containing a designated copy of $G$, which we usually identify with $G$. A \newword{$\boldsymbol{G}$-homomorphism} from a $G$-group $H$ to another $G$-group $H'$ is a group homomorphism $\phi\colon H\to H'$ such that the restriction $\phi|_G$ is the identity; if $\phi$ is bijective then it is a \newword{$\boldsymbol{G}$-isomorphism}. A \newword{$\boldsymbol{G}$-automorphism} is a $G$-isomorphism from a $G$-group to itself; the $G$-automorphisms of a $G$-group $H$ form the \newword{group of $\boldsymbol{G}$-automorphisms} of $H$, denoted $\Aut_G(H)$. The \newword{free $\boldsymbol{G}$-group of rank $n$} is the free product $G*F_n$, where $F_n$ is the free group of rank~$n$. Write $\Hom_G(H,H')$ for the set of all $G$-homomorphisms from a $G$-group $H$ to another $G$-group~$H'$. A \newword{$\boldsymbol{G}$-presentation} of a $G$-group $H$ is a surjective $G$-homomorphism from $G*F_n$ to~$H$. In particular, $\Hom_G(G*F_n,G)$ is the set of all $G$-presentations of $G$ on $n$ generators.

\begin{proposition}\label{prop:fp}
If $G$ is finitely presented and has trivial center, then $\Aut_G(G*F_n)$ is finitely presented.
\end{proposition}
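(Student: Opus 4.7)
The plan is to exhibit a finite generating set for $\Aut_G(G*F_n)$ and then a finite set of defining relations, by combining a finite presentation of $G$ with a standard finite presentation of $\Aut(F_n)$.

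First I would identify the generators. Fix a finite generating set $\{g_1,\ldots,g_k\}$ and finite relators $\{r_1,\ldots,r_m\}$ of $G$. The proposed generators of $\Aut_G(G*F_n)$ are: a finite family $N_1,\ldots,N_s$ of Nielsen-type generators of $\Aut(F_n)$, lifted by extending them to act trivially on the $G$-factor; and the left/right multiplications $\lambda_{i,g_j}\colon x_i\mapsto g_j x_i$ and $\rho_{i,g_j}\colon x_i\mapsto x_i g_j$ for $1\le i\le n$, $1\le j\le k$. That these generate follows from the natural surjection $\pi\colon\Aut_G(G*F_n)\twoheadrightarrow\Aut(F_n)$ induced by the quotient map $G*F_n\twoheadrightarrow F_n$ (any $G$-automorphism preserves the normal closure $N$ of $G$ and hence descends): the $N_\ell$'s cover $\Aut(F_n)$, reducing the task to showing $\ker(\pi)$ is generated by the $\lambda$'s, $\rho$'s, and their $\Aut(F_n)$-conjugates. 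The latter conjugates include all partial conjugations $x_i\mapsto wx_iw^{-1}$ and mixed twists like $x_i\mapsto x_j^{\pm 1}gx_j^{\mp 1}x_i$, which exhaust $\ker(\pi)$ by a Fouxe--Rabinovitch/Whitehead-type normal-form argument on $G*F_n$.

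Next I would write down the defining relations: a finite presentation of $\Aut(F_n)$ in the $N_\ell$'s; for each relator $r$ of $G$ and each $i$, the finitely many relations encoding that $g\mapsto\lambda_{i,g}$ and $g\mapsto\rho_{i,g}$ extend to (anti)homomorphisms $G\to\Aut_G(G*F_n)$; commutation relations $[\lambda_{i,g},\lambda_{j,h}]=[\lambda_{i,g},\rho_{j,h}]=[\rho_{i,g},\rho_{j,h}]=1$ for $i\ne j$, together with $[\lambda_{i,g},\rho_{i,h}]=1$ for all $i$; and a finite list of interaction relations $N_\ell \lambda_{i,g_j} N_\ell^{-1}=w$ and the analogue for $\rho$, where $w$ is the explicit word obtained by direct computation from the action of $N_\ell$ on $x_i$. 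Completeness is verified by a normal-form argument: given a word $W$ representing $1$, apply $\pi$ to reduce it via the lifted $\Aut(F_n)$-relations to a word in $\ker(\pi)$; use the interaction relations to push all $N_\ell$'s to the outside; then use the $G$-relations and commutation relations to trivialize the remaining word in $\lambda$'s and $\rho$'s.

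The main obstacle is verifying the Fouxe--Rabinovitch-style generation of $\ker(\pi)$ together with the sufficiency of the interaction relations. The hypothesis $Z(G)=\{1\}$ enters by ensuring that the centralizer of $G$ in $G*F_n$ is trivial, so $\Aut_G(G*F_n)\cap\Inn(G*F_n)=\{1\}$; this rules out nontrivial inner automorphisms of $G*F_n$ that fix $G$ pointwise and would otherwise force extra identifications among our generators (for instance, a central element $z\in Z(G)$ would make $\prod_i \lambda_{i,z}\rho_{i,z^{-1}}$ an inner automorphism, requiring global cross-index relations). Alternatively, one could shortcut this analysis by invoking known finite-presentability results for automorphism groups of free products (Gilbert; McCullough--Miller), obtaining a finite presentation of $\Aut(G*F_n)$, and then deducing that of $\Aut_G(G*F_n)$ via a Reidemeister--Schreier argument applied to the relevant subgroup.
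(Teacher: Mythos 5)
Your outline (split surjection $\pi\colon\Aut_G(G*F_n)\twoheadrightarrow\Aut(F_n)$, generators from Nielsen transformations plus the multiplication maps $\lambda_{i,g},\rho_{i,g}$, then relations) is a reasonable plan, and the generation step is essentially what the paper proves in its Proposition on the generating set $X$. But the proposal has a genuine gap exactly where you flag "the main obstacle": the completeness of the relations is asserted, not proved. The kernel $\ker(\pi)$ is only \emph{normally} generated by the $\lambda$'s and $\rho$'s; after "pushing the $N_\ell$'s to the outside" you must show that an arbitrary trivial word in this normal subgroup is a consequence of your finite list, which requires controlling the relation module of $\ker(\pi)$ as an $\Aut(F_n)$-group. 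That verification is the entire content of the Fouxe--Rabinovitch/Gilbert/Collins--Gilbert presentations of automorphism groups of free products, and it occupies whole papers; a "normal-form argument" cannot be waved through here.

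The proposed shortcut does not repair this, for two reasons. First, under the hypotheses of the proposition $\Aut(G*F_n)$ need not be finitely presented at all: Gilbert's and McCullough--Miller's results require each factor's \emph{automorphism group} to be finitely presented, and nothing is assumed about $\Aut(G)$ (only $G$ itself is finitely presented). Second, even granting finite presentability of $\Aut(G*F_n)$, the subgroup $\Aut_G(G*F_n)$ has infinite index in it (the cosets see, at least, the image in $\Aut(G)$ and the choice of conjugate of the factor $G$), so Reidemeister--Schreier yields nothing.

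The paper sidesteps both issues by passing to Carette's group $\Aut_i(G*F_n)$ of automorphisms that restrict to an \emph{inner} automorphism on $G$. This group is finitely presented assuming only that $G$ is finitely presented \cite[Theorem~4.1]{carette11}; since $Z(G)=1$ it decomposes as $\Inn(G*F_n)\rtimes\Aut_G(G*F_n)$, and the quotient of a finitely presented group by the finitely generated normal subgroup $\Inn(G*F_n)\cong G*F_n$ is finitely presented. To make your argument complete you would either have to carry out the presentation-completeness verification in full (in effect reproving Carette's theorem) or cite such a result for the subgroup you actually need, not for all of $\Aut(G*F_n)$.
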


\begin{proof}
If $G=\{1\}$ then $\Aut_G(G*F_n)=\Aut(F_n)$ is finitely presented. Now suppose $G\ne\{1\}$. Since $G$ has trivial center, $\Aut_G(G*F_n)$ contains no inner automorphisms of $G*F_n$ except the trivial one, and so the subgroup $\Inn(G*F_n) \cdot \Aut_G(G*F_n)$ of $\Aut(G*F_n)$ splits as a semidirect product $\Inn(G*F_n)\rtimes \Aut_G(G*F_n)$. Because $\Inn(G*F_n)$ is finitely generated, in order to prove that $\Aut_G(G*F_n)$ is finitely presented, it now suffices to prove that $\Inn(G*F_n)\rtimes \Aut_G(G*F_n)$ is finitely presented. This is the group of all automorphisms of $G*F_n$ that agree with an inner automorphism on $G$, and this group is shown to be finitely presented in \cite[Theorem~4.1]{carette11} (where it is denoted $\Aut_i(G*F_n)$), again using the fact that $G$ has trivial center.
\end{proof}

It will be convenient later to have an explicit generating set $X$ for $\Aut_G(G*F_n)$ pinned down. At this point we assume $G$ is freely indecomposable, and not cyclic (for example our main case of interest: $G$ is non-abelian simple). The specific generating set we will use is far from optimal, but the way we lay it out here will be useful. First let $Y$ be the set of all elements of $\Aut_G(G*F_n)$ consisting of the following automorphisms (where in all cases, of course, $G$ is fixed):
\begin{enumerate}
    \item Those automorphisms induced by permutations of the basis $\{x_1,\dots,x_n\}$.\smallskip
    \item The automorphism fixing $x_2,\dots,x_n$ and sending $x_1$ to $x_1^{-1}$.\smallskip
    \item The automorphisms fixing $x_2,\dots,x_n$ and sending $x_1$ to $g^{-1}x_1 g$ for some $g\in G$.\smallskip
    \item The automorphisms fixing $x_2,\dots,x_n$ and sending $x_1$ to $x_1 x_2$ or $x_1 x_2^{-1}$.
\end{enumerate}
Write $X$ for the subset of $\Aut_G(G*F_n)$ that consists of $Y$ along with:
\begin{enumerate}\setcounter{enumi}{4}
    \item The automorphisms fixing $x_2,\dots,x_n$ and sending $x_1$ to $x_1 g$ for some $g\in G$.
\end{enumerate}
Note that $X$ and $Y$ are symmetric, i.e., inverse-closed.

\begin{proposition}
If $G$ is freely indecomposable, non-cyclic, and has trivial center, then $X$ generates $\Aut_G(G*F_n)$.
\end{proposition}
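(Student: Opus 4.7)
The plan is to reduce to a known generating theorem for the automorphism group of a free product via the natural quotient map. Consider the surjection $\pi\colon\Aut_G(G*F_n)\twoheadrightarrow\Aut(F_n)$ induced by the quotient $G*F_n\twoheadrightarrow F_n=(G*F_n)/\langle\langle G\rangle\rangle$: surjectivity holds because any $\alpha\in\Aut(F_n)$ extends uniquely to a $G$-automorphism $\tilde\alpha$ of $G*F_n$ acting as the identity on $G$. The classical Nielsen generators of $\Aut(F_n)$ -- namely permutations of the $x_i$, the inversion $x_1\mapsto x_1^{-1}$, and the Nielsen move $x_1\mapsto x_1 x_2$ -- lift to the $G$-automorphisms of types (1), (2), and (4) in the definition of $X$, respectively. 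Therefore $\pi(\langle X\rangle)=\Aut(F_n)$, and it suffices to prove $\ker\pi\subseteq\langle X\rangle$.

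To handle the kernel, I would appeal to the generalized Fouxe--Rabinovitch theorem for automorphisms of free products (see, e.g., \cite{carette11} and the references therein), which furnishes a generating set for $\Aut(G*F_n)$ comprising factor automorphisms, permutations of isomorphic factors, partial conjugations, and transvections. Restricting to those generators that fix $G$ pointwise yields a generating set for $\Aut_G(G*F_n)$ consisting of: permutations of the $x_i$ (type (1)); inversions $x_i\mapsto x_i^{-1}$ (type (2) combined with (1)); transvections $x_i\mapsto x_i g$ for $g\in G$ (type (5) plus (1)) and $x_i\mapsto x_i x_j$ (type (4) plus (1)); partial conjugations of a single $x_i$ by an element of $G$ (type (3) plus (1)); and partial conjugations $x_i\mapsto x_j^{-1}x_i x_j$, which factor as a right Nielsen move composed with a left Nielsen move. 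The left Nielsen moves in turn lie in $\langle X\rangle$ as conjugates of right Nielsen moves (4) by the inversion (2), in exactly the same way the paper derives $x_1\mapsto g^{-1}x_1$ from (5) and (2). Partial conjugations of multiple $x_i$'s simultaneously decompose as products of the single-index versions.

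The main obstacle is the appeal to the Fouxe--Rabinovitch theorem itself. A self-contained alternative proceeds by Nielsen-style length reduction on $\phi\in\ker\pi$: writing $\phi(x_i)$ in free product normal form as $g_{i,0}y_{i,1}g_{i,1}\cdots y_{i,k_i}g_{i,k_i}$ with $y_{i,j}\in F_n$ nontrivial, the condition $\phi\in\ker\pi$ forces $y_{i,1}\cdots y_{i,k_i}=x_i$ in $F_n$. This guarantees a Nielsen-style cancellation opportunity, and the technical crux is to identify a specific sequence of $X$-moves -- Nielsen moves from (4) possibly conjugated by the inversion (2), combined with $G$-multiplications from (5) -- that strictly decreases the total syllable length $\sum_i k_i$, mirroring the classical Nielsen reduction for $\Aut(F_n)$ but accounting for the interleaved $G$-syllables.
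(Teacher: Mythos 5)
There is a genuine gap, and it sits exactly where the hypothesis ``trivial center'' should be doing work (you never use that hypothesis, which is a warning sign). Your reduction to showing $\ker\pi\subseteq\langle X\rangle$ is fine, and your verifications that the individual candidate generators (partial conjugations, left Nielsen moves as conjugates of right ones by the inversion, etc.) lie in $\langle X\rangle$ match the computations in the paper. The problem is the step ``Restricting to those generators that fix $G$ pointwise yields a generating set for $\Aut_G(G*F_n)$.'' A subgroup $H\le K$ is \emph{not} generated by the members of a generating set of $K$ that happen to lie in $H$: a product of Fouxe--Rabinovitch generators, several of which move $G$ (factor automorphisms of $G$, or partial conjugations of the factor $G$), can perfectly well land in $\Aut_G(G*F_n)$, and nothing in the Fouxe--Rabinovitch theorem tells you that such an element is a product of the generators that fix $G$ pointwise. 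Note also that this step, if it worked, would already give the whole proposition and make your first reduction to $\ker\pi$ redundant.

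The paper avoids this trap by passing to the larger group $A=\Inn(G*F_n)\rtimes\Aut_G(G*F_n)$ of automorphisms agreeing with an inner automorphism on $G$; the trivial-center hypothesis is what makes $\Aut_G(G*F_n)\cap\Inn(G*F_n)$ trivial, so that $\Aut_G(G*F_n)$ is a retract of $A$ (complement to a normal subgroup). Carette \cite{carette11} gives an explicit generating set for precisely this group $A$ (his $\Aut_i(G*F_n)$, built from Gilbert's Whitehead automorphisms), one shows each of those generators lies in $\langle X\cup\Inn(G*F_n)\rangle$, and then projecting along the retraction $A\to\Aut_G(G*F_n)$ kills the inner part and leaves $\langle X\rangle=\Aut_G(G*F_n)$. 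Your fallback, the self-contained Nielsen/syllable-length reduction on elements of $\ker\pi$, would be a legitimate alternative in principle, but you explicitly leave its crux (exhibiting the length-reducing sequence of $X$-moves) unproven; this is essentially the content of the Fouxe--Rabinovitch/Whitehead peak-reduction machinery and cannot be waved through, since the condition $y_{i,1}\cdots y_{i,k_i}=x_i$ in $F_n$ does not by itself produce a cancellation that a single $X$-move realizes. As written, the proof is incomplete; either import Carette's result for the correct group $A$ and use the retraction, or actually carry out the peak reduction.
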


\begin{proof}
Since all elements of $X$ belong to $\Aut_G(G*F_n)$, it suffices to show that $\langle X \cup \Inn(G*F_n) \rangle$ equals $A \coloneqq \Inn(G*F_n) \rtimes \Aut_G(G*F_n)$ (recall from the proof of Proposition~\ref{prop:fp} that this is indeed a semidirect product, because $G$ has trivial center). In \cite[Section~4]{carette11} a generating set for $A$ is given, consisting of two types of automorphisms, so it suffices to show that each of these is contained in $\langle X \cup \Inn(G*F_n) \rangle$. Let us start by noticing that $\langle X \rangle$ contains $\Aut(F_n)$: indeed the generators of type (i), (ii) and (iv) are the standard Nielsen generators of $\Aut(F_n)$.

The first type is the set of \newword{Whitehead automorphisms}, in the sense of \cite[Section~1]{gilbert87}. These are those automorphisms $\alpha \in \Aut(G*F_n)$ with the following property: there exists an element $y \in G \cup \{x_1^{\pm}, \ldots, x_n^{\pm} \}$ such that $\alpha|_G$ is either the identity or conjugation by $y$, and each $x_i$ is mapped to one of $x_i$, $x_i y$, $y^{-1} x_i$, or $y^{-1} x_i y$. In case $\alpha|_G$ is the identity, this is either a composition of generators of types (i), (iii), and (v) (if $y \in G$) or an element of $\Aut(F_n)$ (otherwise), so it belongs to $\langle X \rangle$. In case $\alpha|_G$ is conjugation by $y$, it is a composition of the inner automorphism defined by $y$ with either a composition of generators of types (i), (iii), and (v) (if $y \in G$) or an element of $\Aut(F_n)$ (otherwise). In all cases, we see that every Whitehead automorphism belongs to $\langle X \cup \Inn(G*F_n) \rangle$.

The second type is the set of automorphisms $\alpha$ that act on $G$ as conjugation by an element of $G$, and send each $x_i$ to $x_i^{\pm 1}$. This is a product of a Whitehead automorphism (conjugation on $G$ and identity on $F_n$) and an element of $\Aut(F_n)$ (inverting the appropriate~$x_i$). Therefore generators of this type also belong to $\langle X \cup \Inn(G*F_n) \rangle$ and we are done.
\end{proof}

Note that $\Aut_G(G*F_n)$ contains $\Aut(F_n)$, thanks to the generators (i), (ii), and (iv), and also contains $G$, thanks to the generators (v). Thus, in order to prove the (P)BH conjecture for $\Aut(F_n)$ it suffices to prove it for $\Aut_G(G*F_n)$ for some $G$, and in order to prove it for a particular $G$ it suffices to prove it for $\Aut_G(G*F_n)$ for some $n$.

\section{The action}\label{sec:action}

In this section we analyze the (right) action of $\Aut_G(G*F_n)$ on $\Hom_G(G*F_n,G)$ by precomposition.

\begin{lemma}\label{lem:trans}
The action of $\Aut_G(G*F_n)$ on $\Hom_G(G*F_n,G)$ is transitive.
\end{lemma}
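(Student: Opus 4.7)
The plan is to reduce everything to a single ``base'' element and hit every other element from it. First, I would observe that evaluating at the free generators gives a natural bijection
\[
\Hom_G(G*F_n, G) \longleftrightarrow G^n, \qquad \phi \longmapsto (\phi(x_1),\dots,\phi(x_n)),
\]
since a $G$-homomorphism from the free $G$-group is determined by where it sends the free basis, and any choice is allowed. Under this identification, the right action $\phi\cdot\alpha := \phi\circ\alpha$ of $\alpha\in\Aut_G(G*F_n)$ sends the tuple $(\phi(x_1),\dots,\phi(x_n))$ to $(\phi(\alpha(x_1)),\dots,\phi(\alpha(x_n)))$, evaluating the words $\alpha(x_i)$ via $\phi$.

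Next I would single out the \emph{trivial} $G$-homomorphism $\phi_0\colon G*F_n\to G$, which restricts to the identity on $G$ and kills every $x_i$; it corresponds to the tuple $(1,\dots,1)\in G^n$. It suffices to show that every element of $\Hom_G(G*F_n,G)$ lies in the $\Aut_G(G*F_n)$-orbit of $\phi_0$, since that already yields transitivity by composition with an inverse.

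Given an arbitrary target tuple $(g_1,\dots,g_n)\in G^n$, I would write down the $G$-endomorphism $\alpha$ of $G*F_n$ fixing $G$ and sending $x_i\mapsto x_i g_i$. This is a $G$-automorphism: its inverse is the $G$-endomorphism $\alpha^{-1}$ fixing $G$ and sending $x_i\mapsto x_i g_i^{-1}$, and a direct check (using that both maps fix $G$ pointwise) confirms $\alpha\circ\alpha^{-1} = \alpha^{-1}\circ\alpha = \id$. In fact $\alpha$ is just a product of the generators of type~(v) from the previous section. Computing the action, $\phi_0\circ\alpha$ sends $x_i\mapsto \phi_0(x_i g_i) = 1\cdot g_i = g_i$, so $\phi_0\cdot\alpha$ is exactly the $G$-homomorphism corresponding to $(g_1,\dots,g_n)$. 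This shows the orbit of $\phi_0$ is everything, and the lemma follows.

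There is really no hard step here; the only thing to verify carefully is that the prescribed $\alpha$ is genuinely an automorphism and not merely an endomorphism, which is immediate from writing down its inverse. This transitivity statement is essentially the assertion that $\Hom_G(G*F_n,G) \cong G^n$ as an $\Aut_G(G*F_n)$-set is acted on transitively, and the translation-like automorphisms $x_i\mapsto x_i g_i$ are tailor-made to realize that.
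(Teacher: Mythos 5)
Your proof is correct and is essentially the paper's argument: both rely on the observation that the "translation'' $G$-automorphisms $x_i \mapsto x_i g_i$ (the paper uses the variant $x_i \mapsto \psi(x_i)\phi(x_i)^{-1}x_i$ to go directly between two arbitrary points rather than through the base point $\phi_0$) realize any change of the tuple $(\phi(x_1),\dots,\phi(x_n))$. The detour through $\phi_0$ is an immaterial difference.
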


\begin{proof}
Let $\phi,\psi\in \Hom_G(G*F_n,G)$. Let $\alpha$ be the automorphism of $G*F_n$ that fixes $G$ and sends each $x_i$ to $\psi(x_i)(\phi(x_i)^{-1})x_i$. Then $\phi\circ\alpha=\psi$.
\end{proof}

Recall that a group $G$ is called \newword{mixed identity-free (MIF)} if for all $n$ and all $1\ne w\in G*F_n$, there exists a $G$-homomorphism $\phi\colon G*F_n\to G$ such that $\phi(w)\ne 1$. For $G$ a non-MIF group, any $1\ne w$ lying in the kernel of every such $G$-homomorphism is called a \newword{mixed identity}, also called a \newword{law with constants}. The mixed identities in $G*F_n$ comprise the \newword{$\boldsymbol{G}$-Jacobson radical} $J_G(G*F_n)$ of $G*F_n$; see \cite{AGG}. Thus, $G$ is MIF if and only if $J_G(G*F_n)=\{1\}$ for all $n$, which is equivalent to $J_G(G*F_1)=\{1\}$ \cite[Remark~5.1]{hull16}.

\begin{lemma}\label{lem:faithful}
Suppose that $G$ is MIF. Then the action of $\Aut_G(G*F_n)$ on $\Hom_G(G*F_n,G)$ is faithful.
\end{lemma}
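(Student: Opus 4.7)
The plan is to directly translate the MIF hypothesis into faithfulness of the action. Suppose $\alpha \in \Aut_G(G*F_n)$ lies in the kernel of the action, i.e.\ $\phi \circ \alpha = \phi$ for every $\phi \in \Hom_G(G*F_n, G)$. Since $\alpha$ already restricts to the identity on $G$ by definition of a $G$-automorphism, and $G \cup \{x_1, \ldots, x_n\}$ generates $G*F_n$, it suffices to show $\alpha(x_i) = x_i$ for each free generator $x_i$ of $F_n$.

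Fix such an $i$ and set $w_i := \alpha(x_i)\, x_i^{-1} \in G*F_n$. For any $\phi \in \Hom_G(G*F_n, G)$, the equation $\phi(\alpha(x_i)) = \phi(x_i)$ rearranges to $\phi(w_i) = 1$. Thus $w_i$ lies in $\ker\phi$ for \emph{every} $\phi \in \Hom_G(G*F_n, G)$, meaning $w_i \in J_G(G*F_n)$. By the MIF assumption this radical is trivial, so $w_i = 1$ and $\alpha(x_i) = x_i$, completing the argument.

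I do not foresee a substantive obstacle: once the word $\alpha(x_i) x_i^{-1}$ is extracted, the conclusion is essentially a restatement of the definition of MIF. The only things to be careful about are the conventions (the action is by precomposition on the right, and $\alpha$ restricts to the identity on $G$ so we really only need to pin down the images of the free generators) and the fact that MIF as defined in the excerpt gives $J_G(G*F_n) = \{1\}$ for every $n$, not just $n=1$, so we may apply it in the rank we actually have.
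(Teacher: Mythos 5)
Your proof is correct and is essentially the paper's argument run in the contrapositive direction: the paper picks a nontrivial $\alpha$, finds $x_i$ with $\alpha(x_i)\ne x_i$, and uses MIF to produce $\phi$ with $\phi(x_i^{-1}\alpha(x_i))\ne 1$, which is exactly your observation that $\alpha(x_i)x_i^{-1}\in J_G(G*F_n)=\{1\}$ read backwards. No gaps; the point you flag about MIF giving triviality of the radical in every rank is indeed the only hypothesis check needed.
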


We reiterate that this is the only place in all the analysis of the action where we need $G$ to be MIF.

\begin{proof}
The stabilizer in $\Aut_G(G*F_n)$ of $\phi\in \Hom_G(G*F_n,G)$ consists of all $\alpha$ such that $\phi\circ\alpha=\phi$. Let $\alpha$ be a non-trivial element of $\Aut_G(G*F_n)$, so we must find an element of $\Hom_G(G*F_n,G)$ such that $\alpha$ is not in its stabilizer. Since $\alpha$ is non-trivial, there exists a basis element $x_i$ of $F_n$ such that $\alpha(x_i)\ne x_i$. Since $G$ is MIF, there exists $\phi\in \Hom_G(G*F_n,G)$ sending $x_i^{-1}\alpha(x_i)$ to a non-identity element. Now $\phi\circ\alpha$ sends $x_i$ to $\phi(\alpha(x_i))\ne \phi(x_i)$, so $\phi \circ \alpha \neq \phi$ which shows that $\alpha$ acts non-trivially on $\Hom_G(G*F_n,G)$.
\end{proof}

\begin{proposition}\label{prop:2trans}
Suppose $G$ is an infinite simple group, and $n\ge 2$. Then the action of $\Aut_G(G*F_n)$ on $\Hom_G(G*F_n,G)$ is highly transitive.
\end{proposition}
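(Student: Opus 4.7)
The plan is to identify $\Hom_G(G*F_n,G)$ with $G^n$ via $\phi\mapsto(\phi(x_1),\ldots,\phi(x_n))$, under which $\alpha\in\Aut_G(G*F_n)$ acts as $\vec g\cdot\alpha=(\vec g(\alpha(x_1)),\ldots,\vec g(\alpha(x_n)))$, where $\vec g(w)$ denotes the evaluation of $w\in G*F_n$ at the tuple $\vec g$. I would then prove $k$-transitivity by induction on $k$; the case $k=1$ is Lemma~\ref{lem:trans}. For the inductive step, using the inductive hypothesis to send $\phi_1,\ldots,\phi_k$ to $\psi_1,\ldots,\psi_k$ reduces the problem to showing that the stabilizer of any $k$ distinct points in $G^n$ acts transitively on the complement.

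The engine of the argument is an interpolation lemma: for any distinct $P_1,\ldots,P_m,Q\in G^n$ and any $t\in G$, there exists $w\in G*F_n$ with $w(P_i)=1$ for all $i\le m$ and $w(Q)=t$. I would prove this by induction on $m$, the key object being $K_m=\bigcap_{i=1}^m\ker(\epsilon_{P_i})$. Since $K_m\triangleleft G*F_n$ and $\epsilon_Q$ is surjective, $\epsilon_Q(K_m)$ is a normal subgroup of $G$, hence trivial or all of $G$ by simplicity, so the task is to rule out triviality. For the inductive step, consider the joint evaluation $\Phi=(\epsilon_{P_m},\epsilon_Q)\colon G*F_n\to G\times G$. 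Since $G$ is infinite and simple it is automatically non-abelian and centerless, so the normal subgroups of $G\times G$ are only $\{1\}$, $G\times\{1\}$, $\{1\}\times G$, and $G\times G$. Using that $P_m\ne Q$, a conjugation-by-diagonal argument combined with simplicity shows $\Phi$ is surjective. By the inductive hypothesis applied separately with target $P_m$ and with target $Q$, the image $\Phi(K_{m-1})$ projects onto $G$ in both factors, so by the classification it equals $G\times G$; in particular $\{1\}\times G\subseteq\Phi(K_{m-1})$, which supplies $w\in K_{m-1}\cap\ker\epsilon_{P_m}=K_m$ with $\epsilon_Q(w)=t$.

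Given the interpolation lemma, I would build the required stabilizer element by changing one coordinate of the moving point $\vec h$ at a time. To change the $j$-th coordinate of $\vec h$ from $\vec h(j)$ to $c$ while fixing $\psi_1,\ldots,\psi_k$, I would use the automorphism $\alpha(x_j)=\chi\cdot x_j$ (with $\alpha(x_i)=x_i$ for $i\ne j$), where $\chi\in G*\langle x_i:i\ne j\rangle$; the essential point is that $\chi$ does not involve $x_j$, which is precisely what makes $\alpha$ a genuine automorphism. The fixing and moving conditions translate into interpolation constraints on $\chi$ at the $(n-1)$-tuples obtained by projecting $\psi_1,\ldots,\psi_k,\vec h$ away from coordinate $j$, which the interpolation lemma applied in $G*F_{n-1}$ can solve whenever those projections are pairwise distinct (duplicate projections with consistent demands just collapse into a single constraint).

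The main obstacle will be the conflict case, where some $\psi_{i_0}$ agrees with $\vec h$ in every coordinate except $j$, so that the two projections coincide while the demanded values disagree. I would resolve this via a detour through a second coordinate $j'\ne j$ (which exists because $n\ge2$): first modify coordinate $j'$ of $\vec h$ to a value that decouples the problematic tails, then modify coordinate $j$ to $c$, and finally modify coordinate $j'$ to its final target value. The detour values can always be chosen to avoid the finitely many forbidden configurations because $G$ is infinite, and a complexity argument (tracking, for instance, the total number of coordinates in which $\vec h$ still agrees with some $\psi_i$) shows that iterated detours terminate. Composing these single-coordinate moves produces an element of the stabilizer of $\psi_1,\ldots,\psi_k$ sending $\vec h$ to its target, completing the inductive step.
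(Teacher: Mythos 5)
Your interpolation lemma is correct, and it is actually stronger than what the paper establishes at the analogous stage (the paper's separation words only achieve $w(Q)\ne 1$ and then upgrade via products of conjugates using simplicity; your Goursat-style analysis of normal subgroups of $G\times G$ prescribes $w(Q)=t$ outright). The reduction of $(k+1)$-transitivity to transitivity of a $k$-point stabilizer on the complement is also fine. The gap is in the endgame. Your only moves are automorphisms $\alpha(x_j)=\chi x_j$ with $\chi\in G*\langle x_i:i\ne j\rangle$, and for such a move the new $j$-th coordinate of a point $P$ is $\chi$ evaluated at the projection of $P$ away from coordinate $j$, times $P(j)$. Hence if $\mathrm{proj}_j(h)=\mathrm{proj}_j(\psi_{i_0})$ for some $i_0$, then \emph{every} move of this type that fixes $\psi_{i_0}$ also fixes the $j$-th coordinate of $h$: coordinate $j$ is not merely subject to conflicting demands, it is frozen. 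Now take $n=2$, $k=2$, $\psi_1=(a,b)$, $\psi_2=(a',b')$ with $a\ne a'$ and $b\ne b'$, and $h=(a,b')$. These are three distinct points, coordinate $1$ of $h$ is frozen by $\psi_2$ and coordinate $2$ is frozen by $\psi_1$, so the subgroup generated by all of your moves fixing $\psi_1$ and $\psi_2$ fixes $h$ outright. The detour cannot rescue this: the detour coordinate $j'$ is itself frozen, and your complexity count never decreases because no move is available at all. Analogous deadlocks occur for every $n\ge 2$ and $k\ge 2$, so the inductive step fails already at $k=2$.

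The paper sidesteps exactly this by not proving transitivity of the stabilizer of an \emph{arbitrary} $k$-set: by $k$-transitivity it suffices to treat one well-chosen reference set, and it takes all $\psi_i$ of the form $(t_i,1,\dots,1)$ with the $t_i$ distinct. For such a set one checks that at most one coordinate of any $h\notin\{\psi_1,\dots,\psi_k\}$ can be frozen, and a word $w(x_1)$ vanishing on $\{t_1,\dots,t_k\}$ but not at $h(1)$ (your interpolation lemma in the case $n=1$, or the paper's iterated commutators) serves as a lever to drive every other coordinate to a prescribed value while fixing all the $\psi_i$; one preliminary step moves $h(1)$ off $\{t_1,\dots,t_k\}$ if necessary. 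If you replace ``the stabilizer of any $k$ points'' by this specific reference set, your interpolation lemma supplies everything else and the argument closes.
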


\begin{proof}
First let us establish the existence of certain elements of $G*\langle x\rangle$. Given a subset $T\subseteq G$ and an element $g\in G\setminus T$, say that a word $w(x)\in G*\langle x\rangle$ \newword{separates} $T$ from $g$ if $w(t)=1$ for all $t\in T$ and $w(g)\ne 1$. We claim that if $G$ is simple then we can separate any finite $T\subseteq G$ from any $g\in G\setminus T$. Say $T=\{t_1,\dots,t_k\}$. Let $w_1(x)\in G*\langle x\rangle$ be $w_1(x)\coloneqq t_1^{-1}x$, so $w_1(t_1)=1$ and $w_1(g)\ne 1$. Since $t_2^{-1}g$ is non-trivial and $G$ is simple, $G$ is generated by the conjugates of $t_2^{-1}g$. Since $w_1(g)$ is non-trivial and $G$ has trivial center, there must exist a conjugate of $t_2^{-1}g$ that does not commute with $w_1(g)$, say $(t_2^{-1}g)^{h_2}$ for $h_2\in G$. Let $w_2(x)\coloneqq [w_1(x),(t_2^{-1}x)^{h_2}]$, so $w_2(t_1)=w_2(t_2)=1$, and $w_2(g)\ne 1$. Continuing this way, we get a sequence of words $w_1,w_2,\dots,w_k\in G*\langle x\rangle$ defined by $w_i(x)\coloneqq [w_{i-1}(x),(t_i^{-1}x)^{h_i}]$ for appropriate $h_i\in G$, such that $w_i(t_j)=1$ for all $1\le j\le i$ and $w(g)\ne 1$. In particular, $w_k(x)$ separates $T$ from $g$.

Now we will prove that our action is highly transitive. Fix some $1\ne z\in G$ for the duration of the proof. It suffices to prove that for all $k\ge 1$ we can choose some $k$-element subset $\Phi$ of $\Hom_G(G*F_n,G)$ such that the pointwise fixer of $\Phi$ is transitive on the complement of $\Phi$; the result will then follow by induction, with Lemma~\ref{lem:trans} as the base case. Let us choose $\Phi=\{\phi_1,\dots,\phi_k\}$ such that for each $1\le i\le k$ we have $\phi_i(x_j)=1$ for all $2\le j\le n$. Write $t_i=\phi_i(x_1)$ for each $i$ and let $T = \{ t_1, \ldots, t_k\}$. Now let $\phi\in \Hom_G(G*F_n,G)\setminus \Phi$. If $\phi(x_1) \in T$ then some $\phi(x_j)$ must be non-trivial for $2\le j\le n$. Let $\alpha \in \Aut_G(G*F_n)$ fix $x_2,\dots,x_n$ and send $x_1$ to $x_1 w(x_j)$ where $w(x_j)$ is a product of $G$-conjugates of $x_j$ and $x_j^{-1}$, chosen in such a way that $\phi(x_1 w(x_j)) \notin T$ -- this is possible because $G$ is simple. Thus, at this point we may assume that $\phi(x_1) \notin T$.

Write $g=\phi(x_1)$, so $g \in G \setminus T$. By the previous paragraph we can choose $w(x_1)\in G*\langle x_1\rangle$ that separates $T$ from $g$. Since $w(g)\ne 1$ and $G$ is simple, we can write $\phi(x_2)^{-1}z$ as a product of $G$-conjugates of $w(g)$ and $w(g)^{-1}$, say $\phi(x_2)^{-1}z = w(g)^{\pm h_1}\cdots w(g)^{\pm h_\ell}$. Here we write $x^{-y}$ for $(x^{-1})^y$, and as usual $x^y$ denotes $y^{-1}xy$. Now consider the automorphism $\alpha$ in $\Aut_G(G*F_n)$ that fixes $x_1,x_3,\dots,x_n$ and sends $x_2$ to $x_2 w(x_1)^{\pm h_1}\cdots w(x_1)^{\pm h_\ell}$. Note that $\alpha$ fixes $\Phi$, and $(\phi\circ\alpha)(x_2)=z$, so at this point we are free to assume without loss of generality that $\phi(x_2)=z$. Using a similar trick, now with automorphisms that fix $x_2$ and multiply all the other $x_i$ by appropriate words in $x_2$, we can fix $T$ while bringing $\phi$ to the homomorphism sending $x_1,x_3,\dots,x_n$ to $1$ and sending $x_2$ to $z$. Since $z$ was chosen at the start, we see that the fixer of $\Phi$ acts transitively on the complement of $\Phi$, which is what we wanted to show.
\end{proof}

To reiterate, in aiming for the action of $\Aut_G(G*F_n)$ on $\Hom_G(G*F_n,G)$ to be of type~(A), the only place where we need our simple group $G$ to be MIF is in the proof of faithfulness in Lemma~\ref{lem:faithful}, so it is worth mentioning what happens in the non-MIF case.

\begin{observation}\label{obs:nonmif}
When $n=1$, the action of $\Aut_G(G*F_1)$ on $\Hom_G(G*F_1,G)$ is faithful for any $G\ne\{1\}$ with trivial center. When $n\ge 2$ and $G$ is not MIF, the action is never faithful.
\end{observation}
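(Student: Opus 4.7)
The plan is to handle the two assertions of the observation separately.

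For the first assertion ($n=1$), let $\alpha \in \Aut_G(G*F_1)$ lie in the kernel of the action and write $w = \alpha(x)$. Since a $G$-homomorphism $\phi \colon G*F_1 \to G$ is determined by $\phi(x) \in G$, triviality of the action on $\Hom_G(G*F_1,G)$ is equivalent to $w(g) = g$ for every $g \in G$. The key structural input I will use is that every $G$-automorphism of $G*F_1$ has the form $x \mapsto g_0 x^\epsilon g_1$ with $g_0, g_1 \in G$ and $\epsilon \in \{\pm 1\}$; for $G$ freely indecomposable and non-cyclic this is immediate from the generating set $X$ of Section~\ref{sec:group}, since in the $n=1$ case only generators of types (ii), (iii), (v) survive and all have this form, which is preserved under composition and inversion, while in general it is a standard Fouxe-Rabinovitch/Gilbert-style statement applied to the Grushko decomposition of $G*F_1$. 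Substituting $w = g_0 x^\epsilon g_1$ into $w(g) = g$ at $g = 1$ yields $g_1 = g_0^{-1}$, and the condition then reads $g_0 g^\epsilon g_0^{-1} = g$ for all $g \in G$. For $\epsilon = +1$ this forces $g_0 \in Z(G) = \{1\}$, whence $\alpha = \id$; for $\epsilon = -1$ it makes the inversion map on $G$ an inner automorphism of $G$, forcing $G$ to be abelian and hence trivial by the trivial-center hypothesis, contradicting $G \ne \{1\}$.

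For the second assertion ($n \ge 2$ and $G$ non-MIF), my plan is to exhibit an explicit non-trivial element of the kernel. Since $G$ is not MIF, $J_G(G*F_1) \ne \{1\}$, so I choose $1 \ne w(x_1) \in J_G(G*F_1)$ and view it as a non-trivial element of $G*F_n$ via the natural inclusion $G*\langle x_1 \rangle \hookrightarrow G*F_n$. Define $\alpha \in \Aut_G(G*F_n)$ by $\alpha(x_2) = w(x_1) x_2$ and $\alpha(x_i) = x_i$ for $i \ne 2$; this is indeed a $G$-automorphism, with explicit inverse sending $x_2 \mapsto w(x_1)^{-1} x_2$ and fixing the other generators, and it is non-trivial since $w(x_1) \ne 1$. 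For any $\phi \in \Hom_G(G*F_n, G)$, its restriction $\phi|_{G*\langle x_1\rangle}$ is a $G$-homomorphism from $G*F_1$ to $G$ and therefore annihilates $w(x_1)$; consequently $\phi(\alpha(x_2)) = \phi(w(x_1))\phi(x_2) = \phi(x_2)$ and $\phi(\alpha(x_i)) = \phi(x_i)$ for $i \ne 2$, giving $\phi \circ \alpha = \phi$.

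The main obstacle is the structural claim underpinning the $n=1$ case: for the full generality stated (arbitrary non-trivial $G$ with trivial center, not just freely indecomposable non-cyclic) one must invoke a Fouxe-Rabinovitch/Gilbert-style description of $G$-automorphisms of a free product, which is not explicitly established earlier in the paper, although the freely indecomposable non-cyclic case is already implicit in the generating set of Section~\ref{sec:group}. Once the structural form is in hand the two-line case analysis closes the $n=1$ argument, and the non-MIF construction for $n \ge 2$ is comparatively routine.
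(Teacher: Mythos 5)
Your proof is correct and follows essentially the same route as the paper's: the $n=1$ case reduces, via the form $x\mapsto g_0x^{\epsilon}g_1$ read off from the generating set $X$, to evaluating at $g=1$ and then using trivial center (for $\epsilon=+1$) or the fact that inversion being inner forces abelianness (for $\epsilon=-1$); and the $n\ge 2$ case exhibits a kernel element built from a mixed identity in $G*F_1$, differing from the paper only in the immaterial choice of which generator carries the mixed identity and on which side it is multiplied. Your caveat about the structural form of $G$-automorphisms of $G*F_1$ is fair but applies equally to the paper's own appeal to the generating set $X$, which is set up under the standing assumption that $G$ is freely indecomposable and non-cyclic.
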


\begin{proof}
The kernel of the action consists of all automorphisms $\alpha\in\Aut_G(G*F_n)$ such that $\alpha$ sends each $x_i$ to itself times a mixed identity. When $n=1$, any $\alpha\in\Aut_G(G*F_1)$ sends $x_1$ to an element of the form $g x_1 h$ or $gx_1^{-1} h$ for $g,h\in G$ (this is clear from looking at the generating set $X$). Writing this as $x_1(x_1^{-1}gx_1^{\pm 1} h)$, for $\alpha$ to lie in the kernel we would need $x_1^{-1}gx_1 h$ or $x_1^{-1}gx_1^{-1} h$ to be a mixed identity. In either case, evaluating at $x_1=1$ shows that $h=g^{-1}$. In the first case this means $g$ is central, hence $g=1$ and $\alpha$ is the identity. In the second case this means conjugation by $g$ inverts every element of $G$, so inversion is a homomorphism and hence $G$ is abelian, contradicting that $G$ is non-trivial with trivial center.

Now let $n\ge 2$ and suppose $G$ is not MIF. By \cite[Remark~5.1]{hull16}, $J_G(G*\langle x_2\rangle)\ne\{1\}$, and so the kernel contains the non-trivial automorphism fixing $x_2,\dots,x_n$ and sending $x_1$ to $x_1 w$ for $1\ne w\in J_G(G*\langle x_2\rangle)$.
\end{proof}

Even though we do have faithfulness in the $n=1$ case regardless of whether $G$ is MIF, we should emphasize that the proof of Proposition~\ref{prop:2trans} completely breaks down when $n=1$ and we very likely have infinitely many orbits of pairs. Indeed, when $n=1$ automorphisms that stabilize the homomorphism sending $x_1$ to $1$ can only send $x_1$ to $x_1^{\pm g}$ for $g\in G$, so we would need $G$ to have finitely many conjugacy classes: this is certainly not always the case (and it is an open question whether it can ever occur for a finitely presented infinite $G$).

\begin{remark}\label{rmk:garion}
Our focus here is the action of $\Aut_G(G*F_n)$ on $\Hom_G(G*F_n, G)$. This is, at least in spirit, closely related to the action of $\Aut(F_n)$ on the subset of $\Hom(F_n, G)$ consisting of surjective homomorphisms (in words, the action of $\Aut(F_n)$ on presentations of $G$ with $n$ generators), which we denote by $P_n(G)$. This is a very important action, whose study was popularized by Wiegold especially in the case of finite simple groups, see e.g., the excellent surveys \cite{pak, lubotzky}; but it has interesting features also in the case of finitely generated infinite simple groups \cite{wiegold:wilson, wiegold:2gen, characteristic}.

The action of $\Aut(F_n)$ on $P_n(G)$ is by precomposition. Quotienting out $P_n(G)$ by the natural $\Aut(G)$ action by postcomposition defines a set $\overline{P}_n(G)$ on which $\Inn(F_n)$ acts trivially, and hence on which $\Out(F_n)$ acts. It turns out that this action is highly transitive for a very special choice of $G$: a lawless \newword{Tarski monster} (i.e., a lawless group all of whose proper subgroups are cyclic) \cite{garion13}. While being lawless is much less restrictive than being MIF (for example every group with a non-abelian free subgroup is lawless), the requirement that $G$ be a Tarski monster is extremely restrictive, so much so that no finitely presented example is known. Therefore it is unclear whether the stabilizers of such actions can ever be finitely generated, and so this does not seem to produce type~(A) actions for $\Out(F_n)$. It is more likely that one could produce an \emph{embedding} of $\Out(F_n)$ in a group with a type~(A) action, but this remains open~\cite[Problem~5.3]{bbmz_survey}.

It is now known that all acylindrically hyperbolic groups admit faithful highly transitive actions \cite{hull16}, and this applies in particular to $\Out(F_n)$ \cite{bestvinafeighn} (and, for that matter, to $\Aut(F_n)$ \cite{genevoishorbez}). But in this more general result the action is built using small cancellation techniques, and so it is even less likely that the methods could lead to a type~(A) action for $\Out(F_n)$.
\end{remark}

\subsection{Finite generation of the stabilizer}\label{ssec:fg_stab}

In this subsection we prove that all point stabilizers in $\Aut_G(G*F_n)$ of elements of $\Hom_G(G*F_n,G)$ are finitely generated. Since the action is transitive by Lemma~\ref{lem:trans}, it is enough to do this for a single point. Let $\phi_0\in \Hom_G(G*F_n,G)$ be the $G$-homomorphism $\phi_0\colon G*F_n\to G$ sending every $x_i$ to $1$, and write $\Stab(\phi_0)$ for $\Stab_{\Aut_G(G*F_n)}(\phi_0)$. Note that $\Stab(\phi_0)$ consists of all $G$-automorphisms of $G*F_n$ that send each $x_i$ to an element of the normal closure of $\{x_1,\dots,x_n\}$ in $G*F_n$. Equivalently, $\Stab(\phi_0)$ consists of all $G$-automorphisms of $G*F_n$ that stabilize $\ll F_n \rr$. For example, all of $\Aut(F_n)$ lies in $\Stab(\phi_0)$.

\begin{proposition}\label{prop:fg_stab}
If $G$ is finitely generated then $\Stab(\phi_0)$ is finitely generated.
\end{proposition}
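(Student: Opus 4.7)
The plan is to apply Schreier's Lemma using a carefully chosen finite generating set of $\Aut_G(G*F_n)$ together with an explicit transversal for the right cosets $\Stab(\phi_0)\backslash\Aut_G(G*F_n)$. First observe that $\Aut_G(G*F_n)$ is itself finitely generated whenever $G$ is: if $g_1,\dots,g_m$ generate $G$, the (a priori infinite) generating set $X$ from Section~\ref{sec:group} can be replaced by the finite subset $X_0$ consisting of all type (i), (ii), (iv) generators together with the type (iii) and (v) generators corresponding to $g\in\{g_1,\dots,g_m\}$. This uses the composition identities $\gamma_g^{(1)}\gamma_{g'}^{(1)}=\gamma_{gg'}^{(1)}$ and $\tau_g^{(1)}\tau_{g'}^{(1)}=\tau_{gg'}^{(1)}$, where I write $\gamma_g^{(i)}$ and $\tau_g^{(i)}$ for the $G$-automorphisms sending $x_i$ to $g^{-1}x_ig$, respectively $x_ig$, and fixing the other $x_j$'s (for $i>1$ these arise from the given type (iii) and (v) generators by conjugating with permutations).

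Next, identify $\Hom_G(G*F_n,G)$ with $G^n$ via $\phi\mapsto(\phi(x_1),\dots,\phi(x_n))$, so that $\phi_0$ corresponds to $(1,\dots,1)$ and, by Lemma~\ref{lem:trans}, the coset space $\Stab(\phi_0)\backslash\Aut_G(G*F_n)$ is identified with $G^n$. Define a section $T\colon G^n\to\Aut_G(G*F_n)$ by $T(h_1,\dots,h_n):=\tau_{h_1}^{(1)}\cdots\tau_{h_n}^{(n)}$. Since $\tau_{h_i}^{(i)}$ and $\tau_{h_j}^{(j)}$ commute for $i\neq j$, $T(h)$ is well-defined, and $\phi_0\cdot T(h)$ is manifestly the homomorphism $x_i\mapsto h_i$. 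Schreier's Lemma now presents $\Stab(\phi_0)$ as the subgroup generated by the Schreier elements $T(h)\cdot x\cdot T(h\cdot x)^{-1}$ for $h\in G^n$ and $x\in X_0$.

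The heart of the proof is a case-by-case computation showing that every such Schreier element lies in the subgroup generated by a fixed finite subset $S\subset\Stab(\phi_0)$, which I take to be the Nielsen generators of $\Aut(F_n)\subseteq\Stab(\phi_0)$ together with the conjugation generators $\gamma_{g_j}^{(i)}$ for $1\le i\le n$ and $1\le j\le m$. For $x=\tau_{g_j}^{(1)}$ (type (v)) the Schreier element is \emph{trivial}, because the identity $\tau_{h_1}^{(1)}\tau_{g_j}^{(1)}=\tau_{h_1g_j}^{(1)}$ yields $T(h)\tau_{g_j}^{(1)}=T(h\cdot\tau_{g_j}^{(1)})$. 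For $x$ a permutation (type (i)), conjugating the $\tau_{h_i}^{(i)}$'s through $x$ simply reindexes them into $T(h\cdot x)$, so the Schreier element collapses to $x$ itself. For $x=\gamma_{g_j}^{(1)}$ (type (iii)), the $\tau_{h_i}^{(i)}$'s for $i>1$ cancel in pairs (as $\gamma_{g_j}^{(1)}$ commutes with them), and a direct computation shows $\tau_{h_1}^{(1)}\gamma_{g_j}^{(1)}\tau_{g_j^{-1}h_1g_j}^{(1)-1}=\gamma_{g_j}^{(1)}$. For the inversion $x_1\mapsto x_1^{-1}$ (type (ii)), similar cancellation leaves $\tau_{h_1}^{(1)}x\tau_{h_1}^{(1)}$, which equals $\gamma_{h_1}^{(1)}\cdot x$. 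For a Nielsen generator $x_1\mapsto x_1x_2$ (type (iv)), the Schreier element reduces to the conjugate $\gamma_{h_1}^{(2)-1}x\gamma_{h_1}^{(2)}$. In each case, every $\gamma_h^{(i)}$ with $h\in G$ is a product of the $\gamma_{g_j}^{(i)}$'s in $S$, so the Schreier element lies in $\langle S\rangle$, giving $\Stab(\phi_0)=\langle S\rangle$. The most delicate step is type (iv), since the Nielsen generator does not commute with $\tau_h^{(1)}$ or $\tau_h^{(2)}$; but the residual ``twist'' left after simplification is precisely a $G$-conjugation by $h_1$ acting on $x_2$, which was placed in $S$ by design.
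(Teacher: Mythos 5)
Your proof is correct and is essentially the same argument as the paper's: your Schreier transversal $T(h)=\tau_{h_1}^{(1)}\cdots\tau_{h_n}^{(n)}$ and the resulting Schreier generators $T(h)\,x\,T(h\cdot x)^{-1}$ are exactly the paper's quasi-retraction $T(\alpha)=\alpha\beta_\alpha$ and its identity $T(\alpha\xi)=T(\alpha)\psi$, unwound coset by coset, and your case-by-case computations over the five generator types produce the same elements $\psi$ (trivial for type (v), $\xi$ itself for types (i) and (iii), $\xi$ times a $G$-conjugation for types (ii) and (iv)). The only cosmetic difference is that you phrase the conclusion via Schreier's Lemma and a finite subset $S$ rather than via the infinite symmetric set $Y$ together with finite generation of $G$; both yield the same finitely generated subgroup.
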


\begin{proof}
We have $G*F_n = \ll F_n \rr \rtimes G$, so, viewing this as an ``external'' semidirect product, there is a function (not homomorphism) $t\colon G*F_n \to \ll F_n\rr$ sending $(f,g)$ to $f$. We will also view $t$ as the function $(f,g)\mapsto(f,1)$ from $G*F_n$ to itself. Note that $t$ is invariant under right multiplication by elements of $G$, and is equivariant with respect to conjugation by elements of $G$. Also note that since $(f,g)^{-1}=(g^{-1}f^{-1}g,g^{-1})$, we have
\begin{equation}
\label{eq:t:inverse}
    t((f,g)^{-1}) = g^{-1} t(f,g)^{-1} g,
\end{equation}
that is, $t$ respects inversion up to a straightforward conjugation.

Let $T$ be the function (not homomorphism) from $\Aut_G(G*F_n)$ to $\Stab(\phi_0)$ that sends $\alpha$ to the homomorphism $T(\alpha)\colon G*F_n\to G*F_n$ that (fixes $G$ and) sends each $x_i$ to $T(\alpha)(x_i)\coloneqq t(\alpha(x_i))$. For each $1\le i\le n$ we have $T(\alpha)(x_i)=\alpha(x_i)g_i=\alpha(x_i g_i)$ for some $g_i\in G$, so letting $\beta_\alpha\in\Aut_G(G*F_n)$ be the $G$-automorphism sending each $x_i$ to $x_i g_i$, we get $T(\alpha)=\alpha\beta_\alpha$. In particular, $T(\alpha)$ is a $G$-automorphism. It clearly stabilizes $\ll F_n\rr$, hence is an element of $\Stab(\phi_0)$. Also note that if $\alpha\in\Stab(\phi_0)$ then $T(\alpha)=\alpha$, since $t$ restricted to $\ll F_n\rr$ is the identity.

By now we have a function $T$ from the finitely generated group $\Aut_G(G*F_n)$ onto the subgroup $\Stab(\phi_0)$, which restricts to the identity on $\Stab(\phi_0)$. Even though $T$ is not a homomorphism, our next goal is to prove that it is sufficiently close to being a homomorphism that we are still able to deduce finite generation for $\Stab(\phi_0)$.

Consider the generating set $X$ for $\Aut_G(G*F_n)$ from Section~\ref{sec:group}. Note that the subset $Y\subset X$ is contained in $\Stab(\phi_0)$. Our goal is to prove that $\langle Y\rangle=\Stab(\phi_0)$. This will finish the proof, since $\langle Y\rangle$ is finitely generated thanks to $G$ being finitely generated. It is sufficient to show that for each generator $\xi\in X$ and each $\alpha\in \Aut_G(G*F_n)$ we have $T(\alpha\xi)=T(\alpha)\psi$ for some $\psi\in\langle Y\rangle$. Indeed, since $T$ is the identity on $\Stab(\phi_0)$, this will show that $\Stab(\phi_0)$ is generated by $Y$. We now work through the five types of generators.

\medskip

\noindent Type (i): Let $\xi$ be given by the basis permutation $x_i\mapsto x_{\sigma(i)}$ for $\sigma\in S_n$. Then
\[
T(\alpha\xi)(x_i) = t(\alpha\xi(x_i)) = t(\alpha(x_{\sigma(i)})) = T(\alpha)(x_{\sigma(i)}) = T(\alpha)\xi(x_i)
\]
for all $i$, so $T(\alpha\xi)=T(\alpha)\xi$.

\medskip

\noindent Type (ii): Let $\xi$ fix $x_2,\dots,x_n$ and invert $x_1$. Then
\[
T(\alpha\xi)(x_1) = t(\alpha\xi(x_1)) = t(\alpha(x_1^{-1})) = t(\alpha(x_1)^{-1}).
\]
Say $\alpha(x_1)=(f,g)$, and let $\psi\in Y$ be the element sending $x_1$ to $g^{-1}x_1 g$ and fixing $x_2,\dots,x_n$. Then, using \eqref{eq:t:inverse}:
\[
t(\alpha(x_1)^{-1}) = g^{-1} t(\alpha(x_1))^{-1} g = g^{-1} T(\alpha)(x_1)^{-1} g = T(\alpha)\xi\psi(x_1).
\]
For any $i\ne 1$ we have $T(\alpha\xi)(x_i) = T(\alpha)\xi\psi(x_i)$ trivially, so $T(\alpha\xi)=T(\alpha)\xi\psi$.

\medskip

\noindent Type (iii): Let $\xi$ fix $x_2,\dots,x_n$ and send $x_1$ to $g^{-1}x_1 g$ ($g\in G$). Then
\begin{align*}
T(\alpha\xi)(x_1) &= t(\alpha\xi(x_1)) = t(\alpha(g^{-1}x_1 g)) = t(g^{-1}\alpha(x_1) g) \\
&= g^{-1} t(\alpha(x_1)) g = g^{-1}(T(\alpha)(x_1)) g = T(\alpha)(g^{-1} x_1 g) = T(\alpha)\xi(x_1),
\end{align*}
and for any $i\ne 1$ we have $T(\alpha\xi)(x_i) = T(\alpha)\xi(x_i)$ trivially, so $T(\alpha\xi)=T(\alpha)\xi$.

\medskip

\noindent Type (iv): Let $\xi$ fix $x_2,\dots,x_n$ and send $x_1$ to $x_1 x_2$. Then $T(\alpha\xi)(x_1) = t(\alpha\xi(x_1)) = t(\alpha(x_1 x_2)) = t(\alpha(x_1)\alpha(x_2))$. Write $h_1=\alpha(x_1)$ and $h_2=\alpha(x_2)$, and for $i=1,2$ write $h_i=(f_i,g_i)$ in the external semidirect product notation, so $h_1 h_2=(f_1(g_1 f_2 g_1^{-1}),g_1 g_2)$. We get $t(\alpha(x_1)\alpha(x_2))=t(h_1 h_2)=f_1(g_1 f_2 g_1^{-1})$. Let $\psi\in \langle Y\rangle$ be the element sending $x_1$ to $x_1(g_1 x_2 g_1^{-1})$ and fixing $x_2,\dots,x_n$. Note that $T(\alpha)(\psi(x_1)) = T(\alpha)(x_1(g_1 x_2 g_1^{-1})) = T(\alpha)(x_1)T(\alpha)(g_1 x_2 g_1^{-1}) = f_1 t(g_1 h_2 g_1^{-1}) = f_1 g_1 f_2 g_1^{-1}$. Thus, $T(\alpha\xi)=T(\alpha)\psi$ on $x_1$, and trivially also on the other $x_i$, so these are equal as desired. An analogous argument shows $T(\alpha\xi^{-1})=T(\alpha)\psi^{-1}$.

\medskip

\noindent Type (v): Let $\xi$ fix $x_2,\dots,x_n$ and send $x_1$ to $x_1 g$ ($g\in G$). Then
\[
T(\alpha\xi)(x_1)=t(\alpha\xi(x_1))=t(\alpha(x_1 g))=t(\alpha(x_1)g)=t(\alpha(x_1))=T(\alpha)(x_1),
\]
and trivially $T(\alpha\xi)(x_i)=T(\alpha)(x_i)$ for all $i\ne 1$, so $T(\alpha \xi) = T(\alpha)$ and we are done.
\end{proof}

As a remark, this actually proves that $\Stab(\phi_0)$ is a quasi-retract of $\Aut_G(G*F_n)$, and hence is even finitely presented once $\Aut_G(G*F_n)$ is \cite{alonso94}, though we will not need this.

\section{Proof of the main results}\label{sec:main_proof}

Now we can use the results from the previous sections to prove Theorem~\ref{thrm:autg_pbhc}, after which most of the other main results will follow quickly. We will save the proof of Corollary~\ref{cor:autfn_pbhc} for the next section.

\begin{proof}[Proof of Theorem~\ref{thrm:autg_pbhc}]
Let $G$ be a finitely presented simple MIF group and let $n\ge 2$. Consider $\Aut_G(G*F_n)$ acting on $\Hom_G(G*F_n,G)$, and we claim this action is of type~(A). It is faithful by Lemma~\ref{lem:faithful}. The group $\Aut_G(G*F_n)$ is finitely presented by Proposition~\ref{prop:fp}. The stabilizer of $\phi_0\in \Hom_G(G*F_n,G)$ is finitely generated by Proposition~\ref{prop:fg_stab}, and the action is transitive by Lemma~\ref{lem:trans} so every point stabilizer is finitely generated. Finally, there are finitely many orbits of pairs by Proposition~\ref{prop:2trans}.
\end{proof}

\begin{proof}[Proof of Theorem~\ref{thrm:autfn}]
For $n=1$ we have $\Aut(F_1)\cong \Z/2\Z$ so there is nothing to prove. Now assume $n \ge 2$. Fix some finitely presented simple MIF group, for instance Thompson's group $V$. Note that $\Aut(F_n)$ embeds in $\Aut_V(V*F_n)$, which admits an action of type~(A) by Theorem~\ref{thrm:autg_pbhc}. Thus, $\Aut(F_n)$ satisfies PBH for all $n$.
\end{proof}

\begin{proof}[Proof of Theorem~\ref{thrm:mif_pbhc}]
First note that (i) implies (ii) by \cite[Theorem~A]{zaremsky_fp_tbt}. By \cite[Proposition~5.4]{bbmz_hyp}, any twisted Brin--Thompson group admits a faithful highly transitive action, so (ii) implies (iii). Any finitely presented simple group admitting a highly transitive action is MIF thanks to \cite[Theorem~5.9]{hull16} (see also \cite[Proposition~A.1]{leboudec22}), so (iii) implies (iv). It remains to prove that (iv) implies (i). Let $G$ be a finitely presented simple MIF group. Since $G$ embeds in $\Aut_G(G*F_2)$, for example by sending $g$ to the $G$-automorphism that takes $x_1$ to $x_1 g$ and fixes $x_2$, and $\Aut_G(G*F_2)$ admits an action of type~(A) by Theorem~\ref{thrm:autg_pbhc}, we conclude that $G$ satisfies PBH.
\end{proof}

\begin{proof}[Proof of Corollary~\ref{cor:freeprod_pbhc}]
The main idea of this proof is due to Xiaolei Wu. Let $A$ and $B$ be groups that satisfy PBH. By \cite[Proposition~5.5]{zaremsky_fp_tbt}, the direct product $A\times B$ also satisfies PBH, so $A\times B$ embeds in a finitely presented twisted Brin--Thompson group, which we will just denote by $G$. Since $G$ is finitely presented, simple, and MIF, Theorem~\ref{thrm:autg_pbhc} says that $\Aut_G(G*F_n)$ satisfies PBH. Now observe that (for $n\ge 2$) $\Aut_G(G*F_n)$ contains $G*G$, and hence $A*B$, for example as the subgroup of $G$-automorphisms generated by the copy of $G$ fixing $x_2,\dots,x_n$ and sending $x_1$ to $x_1 g$ for $g\in G$, and the copy of $G$ fixing $x_2,\dots,x_n$ and sending $x_1$ to $x_1(x_2^{-1}gx_2)$ for $g\in G$.
\end{proof}

Given that satisfying PBH is preserved under direct and free products, the following is the natural next step:

\begin{question}\label{question:graph}
Let $G$ be a graph product over a finite graph such that each vertex group satisfies PBH. Does $G$ satisfy PBH?
\end{question}

Another natural question one might have is whether satisfying PBH is preserved under amalgamated products, or more general graph of groups constructions, but this is not the case in general. Indeed, solvability of the word problem for an amalgamated product is tied to solvability of the membership problem for the amalgamated subgroup, and so it is easy to construct examples of groups satisfying PBH but whose amagamated product does not even have solvable word problem.

\medskip

The main outstanding general question is the following.

\begin{question}\label{quest:non_mif}
For $G$ a finitely presented simple group and $n\ge 2$, is the quotient of $\Aut_G(G*F_n)$ by the kernel of its action on $\Hom_G(G*F_n,G)$ finitely presented?
\end{question}

Just to recall the context, $\Aut_G(G*F_n)$ itself is finitely presented, and we showed that if $G$ is MIF then this kernel is trivial, so the result holds in this case. By Observation~\ref{obs:nonmif}, if $n=1$ then the kernel is trivial regardless, but for $G$ non-MIF and $n\ge 2$ it is definitely non-trivial, so it is unclear what to expect, even for example for Thompson's group $T$.

If the answer to Question~\ref{quest:non_mif} is yes, then BH is equivalent to PBH, and finitely presented twisted Brin--Thompson groups are universal among all finitely presented simple groups. To see why this follows note that, on the one hand, if a group admits an action that satisfies all the aspects of being of type~(A) except faithfulness, then modding out the kernel of the action yields a faithful action that satisfies all the aspects of being of type~(A) except possibly finite presentability. Also, the copy of $G$ inside $\Aut_G(G*F_n)$ coming from translating $x_1$ by elements of $G$ intersects the kernel of the action on $\Hom_G(G*F_n,G)$ trivially, so $G$ still embeds in the quotient.

A related question, that seems to be of interest in its own right, is the following.

\begin{question}\label{quest:jacobson}
For $G$ a finitely presented simple group, is the quotient of $G*F_n$ by the Jacobson radical $J_G(G*F_n)$ finitely presented? That is, is $J_G(G*F_n)$ finitely normally generated in $G*F_n$?
\end{question}

While it is not immediately clear to us whether Questions~\ref{quest:non_mif} and~\ref{quest:jacobson} are equivalent for $n\ge 2$, the latter feels like the more natural first step. Indeed, even for Thompson's group $T$ this is unclear to us.

Yet another relevant question, which is more succinct than Question~\ref{quest:non_mif} but probably more difficult to prove directly, is the following.

\begin{question}\label{quest:embed_in_mif}
Does every finitely presented simple group embed in a finitely presented simple MIF group?
\end{question}

If the answer is yes, then BH is equivalent to PBH, and finitely presented twisted Brin--Thompson groups are universal among finitely presented simple groups. If the answer is no, then PBH is false. Thus either answer would be very interesting.

\begin{remark}\label{rmk:mif}
Now that we know that all finitely presented simple groups that are highly transitive, or more generally MIF, satisfy PBH, it is worth recording which finitely presented simple groups are known to have these properties. Note that for finitely generated simple groups, highly transitive implies MIF \cite[Theorem~5.9]{hull16} \cite[Proposition~A.1]{leboudec22}, and in fact we are not aware of any finitely presented simple groups that are MIF but not highly transitive (there exist finitely generated examples \cite[Theorem~4.13]{confined:hightrans}).

First there are the ones that are ``obviously'' highly transitive, hence MIF. This includes any finitely presented simple groups of homeomorphisms of the Cantor space $\{1,\dots,n\}^\N$ that contain the commutator subgroup of the Higman--Thompson group $V_n$. In particular this includes Thompson's group $V$ itself, along with all finitely presented commutator subgroups of R\"over--Nekrashevych groups $V_n(G)$ \cite[Theorem~4.7]{nekrashevych18}. Moreover, all twisted Brin--Thompson groups themselves are highly transitive \cite[Proposition~5.4]{bbmz_hyp}, hence MIF.

The finitely presented simple Burger--Mozes groups $\Burger$ from \cite{burgermozes} are not ``obviously'' highly transitive and MIF, but it turns out that they do have these properties; we thank Adrien Le Boudec for this argument. By construction, $\Burger$ is dense in a subgroup of the automorphism group of a locally finite tree $T$ that is $2$-transitive on the boundary \cite[Remark~5]{burgermozes}. It follows from density that the action of $\Burger$ on $\partial T$ is extremely proximal. By \cite[Theorem~B]{mif:trees} it suffices to show that the action of $\Burger$ on $\partial T$ is topologically free. Suppose that this is not the case; this means that there exists a proper open set $U \subset \partial T$ such that the subgroup $\Burger_U$ of elements of $\Burger$ supported on $U$ is non-trivial. By extreme proximality, there exist $g_1, g_2, g_3 \in \Burger$ such that the $g_i U$ are pairwise disjoint. Then the $g_i$-conjugates of $\Burger_U$ generate a direct product. However, $\Burger$ has cohomological dimension $2$, so it cannot contain a direct product of three non-trivial groups. This shows that Burger--Mozes groups are highly transitive, hence MIF, and thus satisfy PBH. In fact, this latter fact was recently established, in a completely different way, in \cite{bux_boonehigman}.
\end{remark}

To the best of our knowledge, the only existing source of finitely presented infinite simple groups for which PBH remains open is non-affine Kac--Moody groups over finite fields \cite{caprace09}. These are closely related to Burger--Mozes groups in some aspects, but with key differences. One of these differences is the behavior of the action on the boundary, which in turn is responsible for the difference in their second bounded cohomology \cite[Theorem~1.8]{capracefujiwara}. In the above proof that Burger--Mozes groups are highly transitive, this was the main input, so a different argument would be needed.

\begin{question}
Are finitely presented simple Kac--Moody groups highly transitive? Are they MIF? Do they satisfy PBH?
\end{question}

\begin{remark}
\label{rem:finitenessproperties}
We proved that if $G$ is finitely presented, simple, and MIF, then the twisted Brin--Thompson group $SV_\Gamma$ is also finitely presented, where $\Gamma = \Aut_G(G*F_n)$ and $S = \Hom_G(G*F_n,G)$. It is natural to conjecture that if $G$ has type $\F_m$ then so does $SV_\Gamma$. Here recall that a group has \newword{type $\boldsymbol{\F}_m$} if it has a classifying space with finite $m$-skeleton, so type $\F_2$ means finitely presented. Since the action of $\Gamma$ on $S$ is highly transitive (Proposition~\ref{prop:2trans}), by \cite[Theorem~D]{belk22} it suffices to show that if $G$ has type $\F_m$ then so does $\Aut_G(G*F_n)$, as do all stabilizers of finite subsets $A \subset S$. Our argument for finite presentability of $\Aut_G(G*F_n)$ (Proposition~\ref{prop:fp}) relied on the combinatorial arguments from \cite{carette11}, so to prove type $\F_m$ for $m > 2$ a more topological argument would be needed, for example possibly applying \cite{brueck2024connectivity}. As for stabilizers of finite subsets, as we mentioned after Proposition~\ref{prop:fg_stab} our proof actually shows that the stabilizer of one point is a quasi-retract of $\Aut_G(G*F_n)$, so at least we know that the relevant finiteness properties for the one-point stabilizers follow from those of $\Aut_G(G*F_n)$ by \cite{alonso94}.
\end{remark}

\section{Boone--Higman for specific groups}\label{sec:AutFn}

In this section we prove Corollary~\ref{cor:autfn_pbhc}, showing that many specific groups of interest satisfy the (P)BH conjecture.

\begin{proof}[Proof of Corollary~\ref{cor:autfn_pbhc}]
By Theorem~\ref{thrm:autfn}, we know that $\Aut(F_n)$ satisfies PBH for all $n$. This immediately implies that ((extended) loop) braid groups satisfy PBH, since we have embeddings $\mathcal{B}_n \to \mathcal{LB}_n \to \mathcal{LB}_n^{ext} \to \Aut(F_n)$; see, e.g., \cite[Section~4]{loopbraid:survey}. Moreover, the pure ribbon braid group and the pure loop braid group are isomorphic \cite[Proposition~5.10]{loopbraid:survey}, which implies that $\mathcal{RB}_n$ is commensurable to a group satisfying PBH and thus satisfies PBH \cite[Proposition~5.6]{zaremsky_fp_tbt}.

For Artin groups, those of type $B_n=C_n$ are the ``annular braid groups'', i.e., the mapping class groups of punctured annuli, and the Artin groups of type~$\widetilde{A}_n$ embed into these~\cite{kent02}.  The Artin groups of type~$D_n$ embed into the mapping class groups of certain finite type surfaces with nonempty boundary~\cite[Theorem~1]{perron92}. Finally, the Artin groups of type $I_2(m)$ are all commensurable to the braid group on three strands~\cite[Theorem~5]{cumplido22} and thus satisfy PBH~\cite[Proposition~5.6]{zaremsky_fp_tbt}.

We are left to prove the results on mapping class groups. In what follows surfaces are always assumed to be orientable and of finite type. We will use the fact that satisfying PBH is invariant under commensurability \cite[Proposition~5.6]{zaremsky_fp_tbt}. In particular, it suffices to consider mapping class groups, and the results for extended mapping class groups will follow.

First we will prove the result for surfaces with at least two punctures. Let $\Sigma$ be a closed surface with at least one puncture, and let $*$ be a marked point on $\Sigma$, so $\MCG(\Sigma, *)$ is the group of all self-homeomorphisms of $\Sigma$ that preserve the basepoint $*$, up to pointed isotopy. Then $\MCG(\Sigma, *)$ embeds into $\Aut(\pi_1(\Sigma))$ by the Dehn--Nielsen--Baer Theorem (see \cite[Theorem~8.8]{farbmargalit} and the paragraph following it), which is just $\Aut(F_n)$ for some $n$, and therefore satisfies PBH. Moreover $\MCG(\Sigma, *)$ is the finite index subgroup of $\MCG(\Sigma \setminus \{ * \})$ consisting of mapping classes that fix the puncture $*$, so $\MCG(\Sigma \setminus \{ * \})$ satisfies PBH. This shows that $\MCG(\Sigma)$ satisfies PBH whenever $\Sigma$ is a closed surface with at least two punctures.

For the case of one puncture, first let $\Sigma$ be a closed surface, which we may assume to be of genus $g \geq 1$, and let $\Sigma_* \coloneqq \Sigma \setminus \{ * \}$, for some $* \in \Sigma$. Then $\Sigma_*$ has a double cover $\widetilde{\Sigma}$, which must have genus $2g-1$ and two punctures. Let $L \leq \MCG(\Sigma_*)$ denote the subgroup of mapping classes that can be lifted to $\widetilde{\Sigma}$. Identifying $\MCG(\Sigma_*)$ with $\Aut(\pi_1(\Sigma))$ via the Dehn--Nielsen--Baer Theorem \cite[Theorem 8.8]{farbmargalit}, $L$ corresponds to the subgroup of automorphisms that preserve $H \leq \pi_1(\Sigma)$ where $H$ is the index-$2$ subgroup defined by the double cover. It follows that $L$ has finite index in $\MCG(\Sigma_*)$. Finally, we note that each element in $L$ has a unique lift that fixes the two punctures in~$\widetilde{\Sigma}$. This defines a homomorphism $L \to \MCG(\widetilde{\Sigma})$, which is injective by a version of the Birman--Hilden Theorem \cite[Corollary~4]{aramayona09}. Since $\MCG(\widetilde{\Sigma})$ satisfies PBH by the previous paragraph, this proves PBH for $L$. Since $L$ has finite index in $\MCG(\Sigma_*)$, this proves PBH for $\MCG(\Sigma_*)$.

Next let $\Sigma$ be a surface with non-empty boundary. Let $\Sigma'$ be the surface obtained from $\Sigma$ by attaching a twice-punctured disk to every boundary component of $\Sigma$. Then $\MCG(\Sigma)$ embeds into $\MCG(\Sigma')$ \cite[Theorem~3.18]{farbmargalit}, which is handled by the previous paragraph.

The only remaining case to prove is when $\Sigma$ is the closed surface of genus $2$. Let $h \in \MCG(\Sigma)$ be the hyperelliptic involution. The subgroup $\langle h \rangle$ is central and of order $2$, and $\MCG(\Sigma)/\langle h \rangle \cong \MCG(\Sigma')$, where $\Sigma'$ is a sphere with six punctures \cite{birmanhilden}. Moreover, $\MCG(\Sigma)$ is residually finite \cite[Theorem~6.11]{farbmargalit}, and so there exists a finite quotient $Q$ of $\MCG(\Sigma)$ such that $\langle h \rangle$ maps injectively into $Q$. The diagonal map $\MCG(\Sigma) \to \MCG(\Sigma') \times Q$ is an embedding, which shows that $\MCG(\Sigma)$ is commensurable to $\MCG(\Sigma')$, which satisfies PBH.
\end{proof}

\begin{remark}\label{rem:othergroups}
Knowing that every $\Aut(F_n)$ satisfies (P)BH not only establishes the (P)BH conjecture for the above groups, for which it was not previously known, but it also recovers the (P)BH conjecture for some important classes of groups for which it was previously known.

First, we recover the fact that virtually compact special groups satisfy PBH. Indeed, by commensurability invariance it suffices to show this for RAAGs \cite{special}, and RAAGs embed into $\Aut(F_n)$ \cite{humphries} (in fact they even embed into braid groups \cite{raagsinbraids}), so we conclude from Theorem~\ref{thrm:autfn} that they satisfy PBH.
It was already known that RAAGs satisfy PBH, in fact for various reasons. First, they embed into $\mathrm{GL}_n(\mathbb{Z})$ \cite{raags:linear}, which satisfies PBH thanks to work of Scott \cite{scott84}. More directly, every RAAG embeds in some $nV$ \cite{belk20}, in fact into $2V$ \cite{salo}, and thus satisfies PBH.

Next, we recover the fact that (finitely generated free)-by-cyclic groups satisfy PBH. Indeed, let $\alpha \in \Aut(F_n)$ and consider the free-by-cyclic group $F_n \rtimes_\alpha \mathbb{Z}$. If $\alpha$ has infinite order in $\mathrm{Out}(F_n)$, then the action by conjugation on $F_n$ induces an embedding $F_n \rtimes_\alpha \mathbb{Z} \to \Aut(F_n)$ and we conclude by Theorem~\ref{thrm:autfn}. Otherwise $F_n \rtimes_\alpha \mathbb{Z}$ is commensurable to $F_n \times \mathbb{Z}$, which satisfies PBH, for instance because it embeds into Thompson's group $V$.
The PBH conjecture for (finitely generated free)-by-cyclic groups was recently established, in a completely different way, in \cite{bux_boonehigman}.
\end{remark}

A natural question is whether a similar approach could prove the (P)BH conjecture for $\Out(F_n)$, and for mapping class groups of closed surfaces. As indicated in Remark~\ref{rmk:garion}, it seems very difficult to find an action of type~(A) for $\Out(F_n)$ itself, so we would want to embed it in some analog of $\Aut_G(G*F_n)$. However, inner automorphisms of $F_n$ do not interact nicely with $G$-automorphisms, so it is not clear whether this is possible. If a version for $\Out(F_n)$ were possible, it is likely that a similar argument would also cover the mapping class group of a closed surface $\Sigma$, since this is an index-$2$ subgroup of $\Out(\pi_1(\Sigma))$.

\bibliographystyle{alpha}

\begin{thebibliography}{FLMMS22}

\bibitem[Alo94]{alonso94}
Juan~M. Alonso.
\newblock Finiteness conditions on groups and quasi-isometries.
\newblock {\em J. Pure Appl. Algebra}, 95(2):121--129, 1994.

\bibitem[ALS09]{aramayona09}
J.~Aramayona, C.~J. Leininger, and J.~Souto.
\newblock Injections of mapping class groups.
\newblock {\em Geom. Topol.}, 13(5):2523--2541, 2009.

\bibitem[Ana86]{anashin}
V.~S. Anashin.
\newblock Mixed identities and mixed varieties of groups.
\newblock {\em Mat. Sb. (N.S.)}, 129(171)(2):163--174, 303, 1986.

\bibitem[BB01]{bigelow:budney}
Stephen~J. Bigelow and Ryan~D. Budney.
\newblock The mapping class group of a genus two surface is linear.
\newblock {\em Algebr. Geom. Topol.}, 1:699--708, 2001.

\bibitem[BBM20]{belk20}
James Belk, Collin Bleak, and Francesco Matucci.
\newblock Embedding right-angled {A}rtin groups into {B}rin-{T}hompson groups.
\newblock {\em Math. Proc. Cambridge Philos. Soc.}, 169(2):225--229, 2020.

\bibitem[BBMZa]{bbmz_hyp}
James Belk, Collin Bleak, Francesco Matucci, and Matthew C.~B. Zaremsky.
\newblock Hyperbolic groups satisfy the {B}oone-{H}igman conjecture.
\newblock arXiv:2309.06224.

\bibitem[BBMZb]{bbmz_survey}
James Belk, Collin Bleak, Francesco Matucci, and Matthew C.~B. Zaremsky.
\newblock Progress around the {B}oone-{H}igman conjecture.
\newblock arXiv:2306.16356.

\bibitem[BF10]{bestvinafeighn}
Mladen Bestvina and Mark Feighn.
\newblock A hyperbolic {${\rm Out}(F_n)$}-complex.
\newblock {\em Groups Geom. Dyn.}, 4(1):31--58, 2010.

\bibitem[BH73]{birmanhilden}
Joan~S. Birman and Hugh~M. Hilden.
\newblock On isotopies of homeomorphisms of {R}iemann surfaces.
\newblock {\em Ann. of Math. (2)}, 97:424--439, 1973.

\bibitem[BH74]{boone74}
William~W. Boone and Graham Higman.
\newblock An algebraic characterization of groups with soluble word problem.
\newblock {\em J. Austral. Math. Soc.}, 18:41--53, 1974.
\newblock Collection of articles dedicated to the memory of Hanna Neumann, IX.

\bibitem[BLIW]{bux_boonehigman}
Kai-Uwe Bux, Claudio Llosa~Isenrich, and Xiaolei Wu.
\newblock On the {B}oone--{H}igman conjecture for groups acting on locally finite trees.
\newblock arXiv:2408.05673.

\bibitem[BM97]{burgermozes}
Marc Burger and Shahar Mozes.
\newblock Finitely presented simple groups and products of trees.
\newblock {\em C. R. Acad. Sci. Paris S\'{e}r. I Math.}, 324(7):747--752, 1997.

\bibitem[BMR00]{AGG}
Gilbert Baumslag, Alexei Myasnikov, and Vladimir Remeslennikov.
\newblock Algebraic geometry over groups.
\newblock In {\em Algorithmic problems in groups and semigroups ({L}incoln, {NE}, 1998)}, Trends Math., pages 35--50. Birkh\"{a}user Boston, Boston, MA, 2000.

\bibitem[BP]{brueck2024connectivity}
Benjamin Br{\"u}ck and Kevin~Ivan Piterman.
\newblock Connectivity of partial basis complexes of freely decomposable groups.
\newblock arXiv:2410.17121.

\bibitem[Bri04]{brin04}
Matthew~G. Brin.
\newblock Higher dimensional {T}hompson groups.
\newblock {\em Geom. Dedicata}, 108:163--192, 2004.

\bibitem[BZ22]{belk22}
James Belk and Matthew C.~B. Zaremsky.
\newblock Twisted {B}rin-{T}hompson groups.
\newblock {\em Geom. Topol.}, 26(3):1189--1223, 2022.

\bibitem[Car11]{carette11}
Mathieu Carette.
\newblock The automorphism group of accessible groups.
\newblock {\em J. Lond. Math. Soc. (2)}, 84(3):731--748, 2011.

\bibitem[CF10]{capracefujiwara}
Pierre-Emmanuel Caprace and Koji Fujiwara.
\newblock Rank-one isometries of buildings and quasi-morphisms of {K}ac-{M}oody groups.
\newblock {\em Geom. Funct. Anal.}, 19(5):1296--1319, 2010.

\bibitem[CFF]{characteristic}
Rémi Coulon and Francesco Fournier-Facio.
\newblock Infinite simple characteristic quotients.
\newblock arXiv:2312.11684.

\bibitem[Cha]{charney:problems}
Ruth Charney.
\newblock Problems related to {A}rtin groups.
\newblock \url{https://people.brandeis.edu/~charney/papers/Artin_probs.pdf}.

\bibitem[CP22]{cumplido22}
M.~Cumplido and L.~Paris.
\newblock Commensurability in {A}rtin groups of spherical type.
\newblock {\em Rev. Mat. Iberoam.}, 38(2):503--526, 2022.

\bibitem[CR09]{caprace09}
Pierre-Emmanuel Caprace and Bertrand R\'{e}my.
\newblock Simplicity and superrigidity of twin building lattices.
\newblock {\em Invent. Math.}, 176(1):169--221, 2009.

\bibitem[Dam17]{loopbraid:survey}
Celeste Damiani.
\newblock A journey through loop braid groups.
\newblock {\em Expo. Math.}, 35(3):252--285, 2017.

\bibitem[dC06]{cornulier06}
Yves de~Cornulier.
\newblock Finitely presented wreath products and double coset decompositions.
\newblock {\em Geom. Dedicata}, 122:89--108, 2006.

\bibitem[Del72]{deligne:Kpi1}
Pierre Deligne.
\newblock Les immeubles des groupes de tresses g\'{e}n\'{e}ralis\'{e}s.
\newblock {\em Invent. Math.}, 17:273--302, 1972.

\bibitem[DJ00]{raags:linear}
Michael~W. Davis and Tadeusz Januszkiewicz.
\newblock Right-angled {A}rtin groups are commensurable with right-angled {C}oxeter groups.
\newblock {\em J. Pure Appl. Algebra}, 153(3):229--235, 2000.

\bibitem[FLMMS22]{mif:trees}
Pierre Fima, Fran\c{c}ois Le~Ma\^{i}tre, Soyoung Moon, and Yves Stalder.
\newblock A characterization of high transitivity for groups acting on trees.
\newblock {\em Discrete Anal.}, pages Paper No. 8, 63, 2022.

\bibitem[FM12]{farbmargalit}
Benson Farb and Dan Margalit.
\newblock {\em A primer on mapping class groups}, volume~49 of {\em Princeton Mathematical Series}.
\newblock Princeton University Press, Princeton, NJ, 2012.

\bibitem[GG13]{garion13}
Shelly Garion and Yair Glasner.
\newblock Highly transitive actions of {${\rm Out}(F_n)$}.
\newblock {\em Groups Geom. Dyn.}, 7(2):357--376, 2013.

\bibitem[GH21]{genevoishorbez}
Anthony Genevois and Camille Horbez.
\newblock Acylindrical hyperbolicity of automorphism groups of infinitely ended groups.
\newblock {\em J. Topol.}, 14(3):963--991, 2021.

\bibitem[Gil87]{gilbert87}
N.~D. Gilbert.
\newblock Presentations of the automorphism group of a free product.
\newblock {\em Proc. London Math. Soc. (3)}, 54(1):115--140, 1987.

\bibitem[HL]{hyde23}
James Hyde and Yash Lodha.
\newblock Finitely presented simple torsion-free groups in the landscape of {R}ichard {T}hompson.
\newblock To appear, \emph{Ann. Sci. {\'E}c. Norm. Sup{\'e}r.}, arXiv:2302.04805.

\bibitem[HO16]{hull16}
Michael Hull and Denis Osin.
\newblock Transitivity degrees of countable groups and acylindrical hyperbolicity.
\newblock {\em Israel J. Math.}, 216(1):307--353, 2016.

\bibitem[Hum94]{humphries}
Stephen~P. Humphries.
\newblock On representations of {A}rtin groups and the {T}its conjecture.
\newblock {\em J. Algebra}, 169(3):847--862, 1994.

\bibitem[HW08]{special}
Fr\'{e}d\'{e}ric Haglund and Daniel~T. Wise.
\newblock Special cube complexes.
\newblock {\em Geom. Funct. Anal.}, 17(5):1551--1620, 2008.

\bibitem[KK15]{raagsinbraids}
Sang-hyun Kim and Thomas Koberda.
\newblock Anti-trees and right-angled {A}rtin subgroups of braid groups.
\newblock {\em Geom. Topol.}, 19(6):3289--3306, 2015.

\bibitem[KP02]{kent02}
R.~P. Kent, IV and D.~Peifer.
\newblock A geometric and algebraic description of annular braid groups.
\newblock volume~12, pages 85--97. 2002.
\newblock International Conference on Geometric and Combinatorial Methods in Group Theory and Semigroup Theory (Lincoln, NE, 2000).

\bibitem[Kuz58]{kuznetsov58}
A.~V. Kuznetsov.
\newblock Algorithms as operations in algebraic systems.
\newblock {\em Izv. Akad. Nauk SSSR Ser. Mat.}, 1958.

\bibitem[LBMB22a]{confined:hightrans}
Adrien Le~Boudec and Nicol\'{a}s Matte~Bon.
\newblock Confined subgroups and high transitivity.
\newblock {\em Ann. H. Lebesgue}, 5:491--522, 2022.

\bibitem[LBMB22b]{leboudec22}
Adrien Le~Boudec and Nicol\'as Matte~Bon.
\newblock Triple transitivity and non-free actions in dimension one.
\newblock {\em J. Lond. Math. Soc. (2)}, 105(2):884--908, 2022.

\bibitem[Lub11]{lubotzky}
Alexander Lubotzky.
\newblock Dynamics of {${\rm Aut}(F_N)$} actions on group presentations and representations.
\newblock In {\em Geometry, rigidity, and group actions}, Chicago Lectures in Math., pages 609--643. Univ. Chicago Press, Chicago, IL, 2011.

\bibitem[MS17]{mccammond17}
Jon McCammond and Robert Sulway.
\newblock Artin groups of {E}uclidean type.
\newblock {\em Invent. Math.}, 210(1):231--282, 2017.

\bibitem[Nek18]{nekrashevych18}
Volodymyr Nekrashevych.
\newblock Finitely presented groups associated with expanding maps.
\newblock In {\em Geometric and cohomological group theory}, volume 444 of {\em London Math. Soc. Lecture Note Ser.}, pages 115--171. Cambridge Univ. Press, Cambridge, 2018.

\bibitem[Pak01]{pak}
Igor Pak.
\newblock What do we know about the product replacement algorithm?
\newblock In {\em Groups and computation, {III} ({C}olumbus, {OH}, 1999)}, volume~8 of {\em Ohio State Univ. Math. Res. Inst. Publ.}, pages 301--347. de Gruyter, Berlin, 2001.

\bibitem[PV92]{perron92}
B.~Perron and J.-P. Vannier.
\newblock Groupe de monodromie g\'eom\'etrique des singularit\'es simples.
\newblock {\em C. R. Acad. Sci. Paris S\'er. I Math.}, 315(10):1067--1070, 1992.

\bibitem[Sal]{salo}
Ville Salo.
\newblock Graph and wreath products in topological full groups of full shifts.
\newblock arXiv:2103.06663.

\bibitem[Sco84]{scott84}
Elizabeth~A. Scott.
\newblock The embedding of certain linear and abelian groups in finitely presented simple groups.
\newblock {\em J. Algebra}, 90(2):323--332, 1984.

\bibitem[Tho80]{thompson80}
Richard~J. Thompson.
\newblock Embeddings into finitely generated simple groups which preserve the word problem.
\newblock In {\em Word problems, {II} ({C}onf. on {D}ecision {P}roblems in {A}lgebra, {O}xford, 1976)}, volume~95 of {\em Stud. Logic Found. Math.}, pages 401--441. North-Holland, Amsterdam-New York, 1980.

\bibitem[Wie88]{wiegold:2gen}
James Wiegold.
\newblock Is the direct square of every {$2$}-generator simple group {$2$}-generator?
\newblock {\em Publ. Math. Debrecen}, 35(3-4):207--209 (1989), 1988.

\bibitem[WW78]{wiegold:wilson}
James Wiegold and John~S. Wilson.
\newblock Growth sequences of finitely generated groups.
\newblock {\em Arch. Math. (Basel)}, 30(4):337--343, 1978.

\bibitem[Zara]{zaremsky_fpss}
Matthew C.~B. Zaremsky.
\newblock Embedding finitely presented self-similar groups into finitely presented simple groups.
\newblock To appear, \emph{Bull. Lond. Math. Soc}. arXiv:2405.09722.

\bibitem[Zarb]{zaremsky_fp_tbt}
Matthew C.~B. Zaremsky.
\newblock Finite presentability of twisted {B}rin-{T}hompson groups.
\newblock To appear, \emph{Proc. Roy. Soc. Edinburgh Sect. A}. arXiv:2405.18354.

\bibitem[Zarc]{zar_taste}
Matthew C.~B. Zaremsky.
\newblock A taste of twisted {B}rin--{T}hompson groups.
\newblock To appear, \emph{Proceedings of the Beyond Hyperbolicity and Charneyfest conferences}. arXiv:2201.00711.

\end{thebibliography}

\end{document}